\theoremstyle{plain}
\newtheorem{thm}{Theorem}
\newtheorem{prop}{Proposition}
\newtheorem{lem}{Lemma}
\newtheorem{cor}{Corollary}
\theoremstyle{remark}
\newtheorem{rem}{Remark}
\title{Enumeration of planar constellations with an alternating
  boundary}
\author{Jérémie Bouttier\\
  \small Université Paris-Saclay, CNRS, CEA, Institut de physique théorique, 91191, Gif-sur-Yvette, France \\[-0.8ex]
  \small Univ Lyon, Ens de Lyon, Univ Claude Bernard, CNRS, Laboratoire de Physique, F-69342 Lyon\\[-0.8ex]
  \small\tt jeremie.bouttier@ipht.fr\\
  \and
  Ariane Carrance\thanks{Supported by the ERC Advanced Grant 740943 GeoBrown.}\\
  \small Université Paris-Saclay, CNRS, Laboratoire de Mathématiques d'Orsay, F-91405 Orsay, France\\[-0.8ex]
  \small\tt ariane.carrance@math.cnrs.fr}
\begin{document}

\maketitle


\begin{abstract}
  A planar hypermap with a boundary is defined as a planar map with a
  boundary, endowed with a proper bicoloring of the inner faces. The
  boundary is said alternating if the colors of the incident inner
  faces alternate along its contour. In this paper we consider the
  problem of counting planar hypermaps with an alternating boundary,
  according to the perimeter and to the degree distribution of inner
  faces of each color. The problem is translated into a functional
  equation with a catalytic variable determining the corresponding
  generating function.

  In the case of constellations---hypermaps whose all inner faces of a
  given color have degree $m\geq 2$, and whose all other inner faces
  have a degree multiple of $m$---we completely solve the functional
  equation, and show that the generating function is algebraic and
  admits an explicit rational parametrization.

  We finally specialize to the case of Eulerian
  triangulations---hypermaps whose all inner faces have degree
  $3$---and compute asymptotics which are needed in another work by
  the second author, to prove the convergence of rescaled planar
  Eulerian triangulations to the Brownian map.
\end{abstract}

\section{Introduction}

\paragraph{Context and motivations.}

The enumerative theory of planar maps has been an active topic in
combinatorics since its inception by Tutte in the
sixties~\cite{Tutte68}.  A recent account of its developments may be
found in the review by Schaeffer~\cite{Schaeffer15}. Many such
developments were motivated by connections with other fields:
theoretical physics, algebraic and enumerative geometry, computer
science, probability theory... In this paper, we consider a specific
question of map enumeration which finds its origin in the
work~\cite{carrance-trig-eul} of the second author on the scaling
limit of random planar Eulerian triangulations. In fact, we establish
key asymptotic estimates which are needed in the proof that these maps
converge to the Brownian map.

Eulerian triangulations are part of a more general family of maps
which, following~\cite{bernardi-fusy-2020}, we call
\emph{hypermaps}. These are maps whose faces are bicolored (say, in
black and white) in such a way that no two faces of the same color are
adjacent. They correspond through duality to bipartite maps. A fairly
general enumeration problem consists in counting planar hypermaps with
a given degree distribution of faces of each color. This problem is
intimately connected~\cite{BoSc02_Ising} with the Ising model on
random maps, first solved by Kazakov and
Boulatov~\cite{Kazakov86,BoKa87}. It received a lot of attention in
the physics literature, due to its rich critical behavior and its
connection with the so-called two-matrix model~\cite{Douglas91}. A
nice account of the enumerative results coming from this approach may
be found in the last chapter of the book by Eynard~\cite{eynard}. Let
us also mention the many recent papers~\cite{ams,ct,ct2,turunen20}
devoted to the local limits of the Ising model on random maps.

To enumerate maps, the standard approach~\cite{Tutte68} consists in
studying the effect of the removal of an edge. Nowadays, this
operation is often called ``peeling''~\cite{CurienSFnotes}. In order
to turn the peeling into equations, one needs to keep track of one or
more auxiliary parameters, called catalytic variables~\cite{BoJe06},
and typically corresponding to boundary lengths. In the context of
hypermaps, a further complication occurs, since one needs to keep
track of \emph{boundary conditions}: colloquially speaking, these
encode the colors of the faces incident to the boundary. The most
tractable boundary condition is the so-called Dobrushin boundary
condition, which consists in having the boundary made of two parts:
one part will be incident to white faces, and the other to black
faces. The Dobrushin boundary condition has the key property that it
is invariant under peeling, provided that we always peel an edge at
the white-black interface. The resulting equations are solved
in~\cite{eynard,ct}, see also~\cite{angel} for a probabilistic
interpretation in the context of site percolation. Knowing how to
treat Dobrushin boundary conditions, one may then consider ``mixed''
boundary conditions~\cite{eynard}, where there is a prescribed number
of white-black interfaces. However, this approach seems to become
intractable when the number of interfaces gets large. Here, we are
considering the extreme situation where the number of interfaces is
maximal, that is to say when white and black faces alternate along the
boundary. We call this situation the \emph{alternating boundary
  condition}. It arises when considering the layer decomposition of
Eulerian triangulations~\cite{carrance-trig-eul}. The alternating
boundary condition is not invariant under peeling, but can be made so
by adding a monochromatic boundary part, and peeling at the first
alternating edge. Through this method, we obtain a functional equation
amenable to the kernel method~\cite{Prodinger04,BoJe06}. We do not
attempt to solve this equation in general, but we concentrate on the
specific case of \emph{constellations}\footnote{We follow the
  terminology of~\cite{bm-s}. Even though we view constellations as
  instances of hypermaps, they can also be viewed as more general
  objects~\cite{lando-zvonkine,BC20}, but we do not consider this
  setting here.}  (encompassing Eulerian triangulations). Thanks to a
shortcut, we directly obtain a rational parametrization of the
generating function of constellations with an alternating boundary,
and having this parametrization at hand is very convenient to compute
asymptotics.

Before presenting our results in more detail, let us mention that
there exists several bijective approaches to the enumeration of
hypermaps~\cite{BoSc02_Ising,bdg,fomap,bouttier-fusy-guitter,bernardi-fusy-2020,AB21},
but we have not succeeded so far in adapting them to the case of
alternating boundary conditions. We leave this as an open problem.

\paragraph{Main results.} Recall that a \emph{planar map} is a
connected multigraph drawn on the sphere without edge crossings, and
considered up to continuous deformation. It consists of vertices,
edges, faces, and corners. A~\emph{boundary} is a distinguished face, which we
often choose as the infinite face when drawing the map on the
plane. It is assumed to be~\emph{rooted}, that is to say there is a
distinguished corner along the boundary. The other faces are called
\emph{inner faces}.

\begin{figure}[h]
\centering
\includegraphics[scale=0.5]{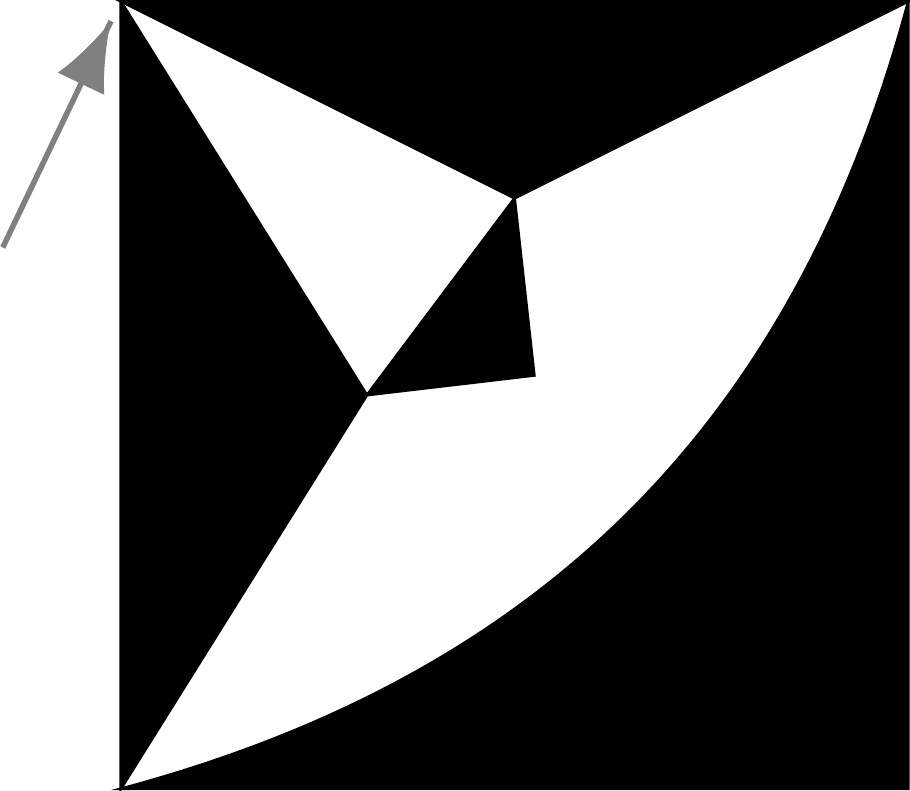}
\hspace{6em}
\includegraphics[scale=0.5]{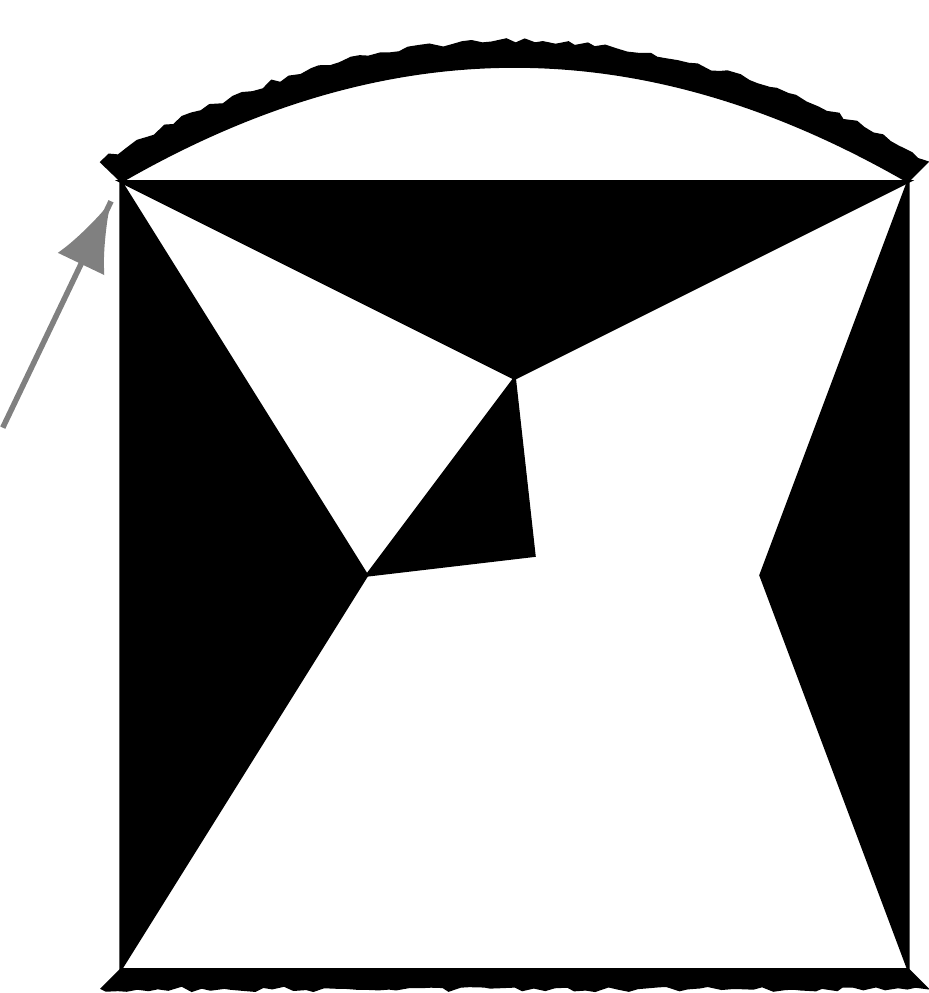}
\caption{Left: a hypermap with a white monochromatic boundary
  ($w=\mathord{\circ\circ}\mathord{\circ\circ}$), with its root corner
  marked by a gray arrow. Right: a hypermap with an alternating
  boundary ($w=\mathord{\circ\bullet}\mathord{\circ\bullet}$). We
  paint in black the external sides of the boundary edges with
  $w_i=\bullet$, since the boundary is considered black at these
  edges.}
\label{fig:HypermapEx}
\end{figure}

A~\emph{hypermap with a boundary} is a planar map with a boundary,
where every inner face is colored either black or white, in a such a
way that adjacent inner faces have different colors. Note that the
boundary does not have a specified color: it may be incident to faces
of both colors. Let $w$ be a word on the alphabet $\{\bullet,\circ\}$,
and $\ell$ be the length of $w$. A \emph{hypermap with boundary
  condition $w$} is a hypermap with a boundary of length $\ell$ such
that, when walking in clockwise direction around the map (starting at
the root corner),
\begin{enumerate}
\item if the $i$-th edge that we visit ($i=1,\ldots,\ell$) is incident
  to an inner face, then this face is white if $w_i=\bullet$ and black
  if $w_i=\circ$,
\item if an edge is incident to the boundary on both sides, and is
  then visited twice, say at steps $i$ and $j$, then $w_i \neq w_j$.
\end{enumerate}
In other words, $w_i$ codes for the color of the boundary face as seen
from the $i$-th edge.  A \emph{hypermap with a white (resp.\ black)
  monochromatic boundary} is a hypermap with boundary condition
$\circ\circ\cdots\circ$ (resp.\ $\bullet\bullet\cdots\bullet$). These
are just hypermaps in the sense of~\cite{bernardi-fusy-2020}. A
\emph{hypermap with an alternating boundary} is a hypermap with
boundary condition $\circ\bullet\circ\bullet\cdots\circ\bullet$. See
Figure~\ref{fig:HypermapEx} for an illustration.

Let $m$ be an integer not smaller than $2$. A hypermap (with a
boundary) is called an~\emph{$m$-constellation} if every black (inner)
face has degree $m$, and every white (inner) face has degree multiple
of $m$. The enumeration of $m$-constellations with a monochromatic
boundary and a prescribed degree distribution for the white faces has
been considered in previous works~\cite{bm-s,albenque-bouttier}, and
we review the results in Section~\ref{sec:reminders}. Our main result
concerns the enumeration of $m$-constellations with an alternating
boundary:

\begin{thm}
  \label{thm:main}
  Given a nonnegative integer $r$, let $A_r$ denote the generating
  function of $m$-constellations with an alternating boundary of
  length $2r$, counted with a weight $t$ per vertex and a weight $x_i$
  per white inner face of degree $mi$ for every $i \geq 1$. By
  convention, we have $A_0=t$, corresponding to the vertex-map. Let
  $d$ be a positive integer and assume that $x_i=0$ for $i>d$. Then,
  the series
  \begin{equation}
    \label{eq:Adef}
    A(w) := \sum_{r \geq 0} A_r w^{r+1}
  \end{equation}
  is algebraic and admits a rational parametrization: for $s$ a formal variable we have
  \begin{equation}
    A(w(s)) =  1 - \frac{(1+\sum_{k=1}^d \alpha_k s^k)^{m-1}}{1+V s}, \qquad
    w(s) := s \frac{(1+\sum_{k=1}^d \alpha_k s^k)^{m-2}}{(1+V s)^2}
  \end{equation}
  where $V$ is the series in $t,x_1,\ldots,x_d$ such that
  \begin{equation}
    \label{eq:Vdef}
    V = t + \sum_{i = 1}^d \binom{mi-1}{i} x_i V^{(m-1)i}
  \end{equation}
  and
  \begin{equation}
    \label{eq:alphadef}
    \alpha_k := \sum_{i=k}^d \binom{mi-1}{i-k} x_i V^{(m-1)i+k-1}, \qquad k=1,\ldots,d.
  \end{equation}
\end{thm}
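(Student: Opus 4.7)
The plan is to derive a functional equation for an enriched generating function by peeling along the alternating boundary, and then exploit the known rational parametrization of the monochromatic constellation series to solve it.

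First, I would introduce a bivariate auxiliary series $H(w,y)$ counting $m$-constellations whose boundary decomposes, reading from the root corner, into an alternating part of length $2r$ (weighted by $w^{r+1}$) followed by a white monochromatic part of length $k$ (weighted by $y^k$); in particular $A(w)=H(w,0)$. The advantage of this mixed shape is that it is stable under peeling the first edge of the alternating part: that edge is either incident to a black inner face, which has degree exactly $m$ and, when removed, exposes $m-1$ new edges along each of which a sub-$m$-constellation with a white monochromatic boundary hangs, the new boundary then being of the same mixed alternating-plus-monochromatic shape; or it is a bridge, splitting the map into two pieces each inheriting a mixed boundary.

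Second, I would translate this recursion into a functional equation. The sub-hypermaps hanging off the exposed edges of the removed black face are governed by the monochromatic generating function recalled in Section~\ref{sec:reminders}, whose rational parametrization is already expressed in terms of the series $V$ of \eqref{eq:Vdef} and the $\alpha_k$ of \eqref{eq:alphadef}. Collecting cases, one obtains a linear equation of the schematic form
\[
K(w,y)\,H(w,y) = y\,A(w) + w\,R(w,y),
\]
with a kernel $K(w,y)$ polynomial in $w$ whose coefficients are explicit polynomials in $y$ built from the monochromatic slice series, and an inhomogeneous term $R$ involving only known quantities. The catalytic variable here is $w$.

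Third, I would apply the kernel method: a suitable algebraic series $w=w(s)$ cancels $K(w,y(s),y(s))$ for a matched parametrization of $y$, and substituting into the functional equation yields an explicit expression for $A(w(s))$. The rational form of the parametrization stated in the theorem should emerge because the monochromatic series is itself rationally parametrized by the same $V$ and $\alpha_k$, and the identification reduces to an algebraic manipulation using \eqref{eq:Vdef}--\eqref{eq:alphadef}.

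The main obstacle is the careful combinatorial setup of the peeling step, namely keeping track of the $(m-1)$-fold gluing of monochromatic slices along the removed black face and treating the bridge case without double-counting, so that the kernel $K(w,y)$ comes out in a clean polynomial form. Once this is in place, the kernel method itself is standard, and the verification that the proposed parametrization satisfies the equation boils down to the defining identities of $V$ and the $\alpha_k$, which should make the announced shortcut transparent compared to the general kernel solution.
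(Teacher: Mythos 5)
Your overall strategy---enrich the alternating boundary with a monochromatic buffer, peel the first alternating edge, and solve the resulting catalytic equation by feeding in the rational parametrization of the monochromatic series---is indeed the route the paper takes, but two of your concrete choices break the argument. First, the buffer has the wrong color and the peeling step is misdescribed. The peeled edge is a $\circ$-edge (it sees a black inner face); when the black face of degree $m$ is revealed it is simply removed, so no sub-constellations ``hang off'' the $m-1$ newly exposed edges---they just become new boundary edges, and since the faces behind them are white they are $\bullet$-edges. These can only be absorbed into a \emph{black} monochromatic buffer $\bullet^p$, not the white one $\circ^k$ you propose: with your class the outcome of this peeling case is a three-part boundary $\bullet^{m}(\circ\bullet)^{r-1}\circ^{k}$, which is not of your mixed shape, so your family is not stable under peeling and no closed recursion results. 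The paper's $M_{p,r}$ places the buffer $\bullet^p$ immediately before the alternating part for exactly this reason.

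Second, you assert that the inhomogeneous term of the kernel equation ``involves only known quantities,'' but this is false in general and is the crux of the proof: the peeling equation naturally produces a truncation of the unknown series itself (the paper's term $\sum_i t_i\left[x^i(M(x,w)+W(x))\right]_{x\ge 0}$). The reason this collapses to the single unknown $A(w)$ is a divisibility lemma specific to constellations ($M_{p,r}=0$ unless $m$ divides $p$, proved via the canonical edge orientation), combined with the fact that black faces have the single degree $m$; this gives $M(x,w)+W(x)=\frac{A(w)}{wx}+O(x^{-m-1})$ and hence a closed form for $R(x,w)$. Without identifying this lemma your equation still contains unknown boundary coefficients and the kernel method does not close. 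Relatedly, the catalytic variable is the one conjugate to the buffer length (the paper's $x$), not $w$: the paper's shortcut is to solve $K(x,\omega(x))=0$ for $\omega$ as a series in $x^{-1}$, rather than for the buffer variable as a function of $w$, and this variant works precisely because, after the simplification above, $K$ and $R$ contain only the one unknown $A(w)$. The parametrization of Theorem~\ref{thm:main} then follows by substituting the rational parametrization of $Y(x)$ from Proposition~\ref{prop:ratY} and setting $s=z^{-m}$.
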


Note that $w(s)$ is a series in $s$ whose coefficients are polynomials
in $V,\alpha_1,\ldots,\alpha_d$, with $w(s)=s+O(s^2)$. Therefore, the
substitution $A(w(s))$ is well-defined and, by reversion of series, we
see that $A_r$ is a polynomial in $V,\alpha_1,\ldots,\alpha_d$, hence
in $V,x_1,\ldots,x_d$. In particular we have $A_0=V-(m-1)\alpha_1=t$
as wanted. We give the expression for $A_1$ in
Appendix~\ref{sec:rootconst}, and we also check that it is consistent
with the generating function of rooted $m$-constellations computed by
another approach. In principle, we may express $A_r$ via the Lagrange
inversion formula, but we do not expect to obtain a practical formula
in this way. Still, as we shall see below, the rational
parametrization contains all the information needed to compute
asymptotics.

In the case $m=2$, through a classical identification between
$2$-constellations and planar bipartite maps, $A_r$ should be the
generating function of planar bipartite maps with a boundary of length
$2r$. This is indeed the case, as we will see in
Remark~\ref{rem:bipcase} below.

We now focus on the case of \emph{Eulerian $m$-angulations} (with a
boundary), i.e.\ $m$-constellations whose white (inner) faces all have
degree $m$. By specializing Theorem~\ref{thm:main} to $d=1$ and
$x_1=1$, we obtain:

\begin{cor}
  \label{cor:mangpar}
  For (planar) Eulerian $m$-angulations with an alternating boundary,
  counted with a weight $t$ per vertex, the rational parametrization
  takes the form
  \begin{equation}
    A(w(s)) =  1 - \frac{(1+V^{m-1} s)^{m-1}}{1+V s}, \qquad
    w(s) := s \frac{(1+V^{m-1} s)^{m-2}}{(1+V s)^2}
  \end{equation}
  where $V$ is the series in $t$ such that
  $V=t+(m-1)V^{m-1}$.
\end{cor}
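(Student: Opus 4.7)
The plan is to obtain the corollary as a direct specialization of Theorem~\ref{thm:main}. First I would verify that Eulerian $m$-angulations with an alternating boundary are precisely the instances of the theorem in which the only admissible white inner faces have degree $m$, i.e.\ the generating function we want corresponds to $x_1=1$ and $x_i=0$ for $i\geq 2$. Setting $d=1$ in the theorem is therefore legitimate, and the weight $x_1=1$ makes each white face neutral in the count, leaving $t$ as the only formal variable (per vertex), as requested in the statement.

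With $d=1$ and $x_1=1$, the only term surviving in the sum defining $V$ is $i=1$, which contributes $\binom{m-1}{1}V^{m-1}=(m-1)V^{m-1}$, yielding the stated equation $V=t+(m-1)V^{m-1}$. Similarly, in the definition of $\alpha_k$, the constraint $k\leq i\leq d=1$ forces $k=1$, and the single term is $\binom{m-1}{0}V^{(m-1)+0}=V^{m-1}$, so $\alpha_1=V^{m-1}$. Consequently the polynomial appearing in the parametrization reduces to
\[
1+\sum_{k=1}^{d}\alpha_k s^k \;=\; 1+V^{m-1}s,
\]
and substituting this into the formulas of Theorem~\ref{thm:main} gives exactly the expressions for $A(w(s))$ and $w(s)$ claimed in the corollary.

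Since the proof is a pure substitution, there is no real obstacle; the only point one might want to double-check is the combinatorial correspondence between $m$-constellations with $x_i=\delta_{i,1}$ and Eulerian $m$-angulations, which is immediate from the degree constraint. No additional analytic or algebraic argument is needed.
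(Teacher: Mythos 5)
Your proof is correct and is exactly the paper's route: the corollary is obtained by the direct specialization $d=1$, $x_1=1$ of Theorem~\ref{thm:main}, and your computations $V=t+(m-1)V^{m-1}$ and $\alpha_1=\binom{m-1}{0}V^{(m-1)+1-1}=V^{m-1}$ match what the paper implicitly does. Nothing further is needed.
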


From this we may compute the first few terms of $A(w)$, namely
\begin{multline}
  \label{eq:Awexp}
  A(w) = t w + \left(V^2 + (2m-3)V^m + \frac{(m-1)(m-2)}2 V^{2m-2}\right) w^2 + \\
  \textstyle \left( 2 V^3 + (6m-10) V^{m+1} + (m-2)(4m-5) V^{2m-1} - \frac{(m-1)(m-2)(2m-3)}3 V^{3m-3} \right) w^3 + \cdots,
\end{multline}
and the coefficient of $w^r$ is a polynomial in $V$. Note that we may
apply again Lagrange inversion, this time on the variable $t$, to get
that
$V^k = \sum_{n \geq 0} \frac{k(m-1)^n}{mn+k} \binom{mn+k}{n}
t^{(m-1)n+k}$ for any $k \geq 1$. Plugging this
into~\eqref{eq:Awexp} allows to extract the coefficient of $t^n w^r$
in $A(w)$.

\begin{figure}[h]
  \centering
  \includegraphics[width=\textwidth]{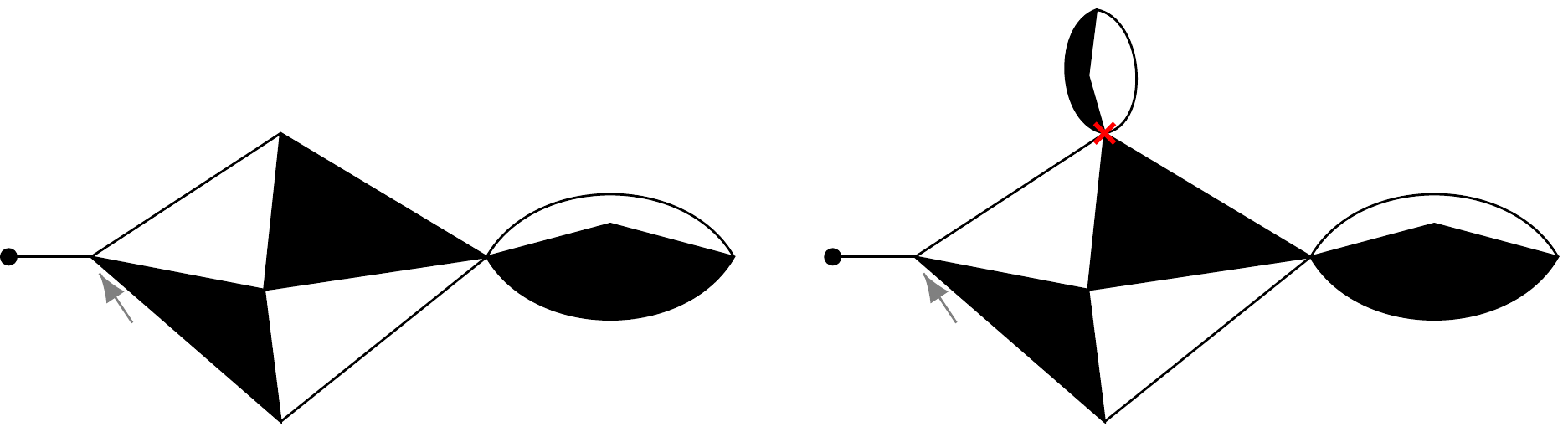}
  \caption{Left: an Eulerian triangulation with a semi-simple
    alternating boundary. Right: an Eulerian triangulation with an
    alternating, but not semi-simple boundary: the vertex that
    violates the semi-simplicity condition is indicated with a red
    cross.}
\label{fig:semi-simple}
\end{figure}

Let us emphasize that the above results concern maps whose boundary is
not necessarily simple. It is however not difficult to deduce a
rational parametrization for maps with a simple boundary via a
substitution argument. In fact, the layer decomposition introduced
in~\cite{carrance-trig-eul} involves Eulerian triangulations with a
\emph{semi-simple} alternating boundary: the external face can have
separating vertices, but only on every other boundary vertex. More
precisely, if we number the boundary corners $0,1,\ldots,2r-1$ in
clockwise direction, $0$ being the root corner, then the boundary is
said semi-simple if there is no separating vertex incident to an
odd-numbered corner.  See Figure~\ref{fig:semi-simple} for an
illustration.  Note that this definition allows for separating
boundary edges, but such edges are necessarily ``pendant'', i.e.\
incident to a vertex of degree one.  From the case $m=3$ of
Corollary~\ref{cor:mangpar} and through substitution and singularity
analysis, we obtain the following results, needed
in~\cite{carrance-trig-eul}:

\begin{thm}
  \label{thm:asymp}
  For $n,p$ nonnegative integers, denote by $B_{n,p}$ the number of
  Eulerian triangulations with a semi-simple alternating boundary of
  length $2p$ and with $n$ black triangles.
  Then, the corresponding generating function reads
  \begin{equation}
    \label{eq:Bsol}
    B(t,z) := \sum_{n \geq 0} \sum_{p \geq 0} B_{n,p} t^n z^p = 
    \frac1{16t} \left( 8t+8z+ \left(1-\sqrt{1-8t}-4z\right)
      \textstyle \sqrt{\frac{(1+\sqrt{1-8t})^2-4z}{1-z}}\right).
  \end{equation}
  For fixed $z \in[0, \frac{1}{4})$, we have the asymptotics
  \begin{equation}
    \label{eq:alternating-gf-better-asymp}
    [t^n]B(t,z) =
    \sum_{p \geq 0} B_{n,p}z^p \underset{n \to \infty}{\sim} \frac{3}{2}\frac{z}{\sqrt{\pi(z-1)(4z-1)^3}}8^n n^{-5/2}
  \end{equation}
  and, for any fixed $p$,
  \begin{equation}
    \label{eq:alternating-gf-first-asymp}
    B_{n,p}\underset{n \to \infty}{\sim} C(p)8^n n^{-5/2},
  \end{equation}
  with 
  \begin{equation}
    \label{eq:alternating-gf-second-asymp}
    C(p)\underset{p \to \infty}{\sim} \frac{\sqrt{3}}{2 \pi}4^p \sqrt{p}.
  \end{equation}
  Moreover, the quantity
  $Z(p):=\sum_nB_{n,p}8^{-n}= B\left(1/8,z\right)$ is finite for all
  $p \geq 0$, and satisfies
  \begin{align}
    \label{eq:alternatingZ}
    \sum_{p \geq 0}Z(p)z^p & = \frac{1}{2}\left(1+8z+\frac{(1-4z)^{3/2}}{(1-z)^{1/2}}\right),\\
    Z(p)&\underset{p \to \infty}{\sim}\frac{1}{4}\sqrt{\frac{3}{\pi}}4^p p^{-5/2}, \qquad Z(0)=1.
  \end{align}
\end{thm}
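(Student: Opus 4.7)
The plan is to derive the closed form of $B(t,z)$ from Corollary~\ref{cor:mangpar} (specialized to $m=3$) via a substitution argument, and then extract the asymptotics by standard singularity analysis.

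First, I would establish a combinatorial substitution identity. A general alternating-boundary Eulerian triangulation decomposes uniquely by cutting along its boundary at each separating vertex incident to an odd-numbered corner: what remains is a semi-simple alternating-boundary map at each of whose $p$ odd corners a rooted general alternating-boundary map is inserted. Because an Eulerian triangulation admits a black--white symmetry swapping the two inner face colours, the generating function $A(w)$ can be used uniformly for insertions at odd corners regardless of the colour convention at the root. Translating the decomposition into generating functions gives the functional identity $A(w) = w\,B(t, A(w))$; via the rational parametrization of Corollary~\ref{cor:mangpar} with $m=3$ this becomes
\begin{equation*}
B(t, A(w(s))) = A(w(s))/w(s).
\end{equation*}

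Second, I would make this explicit. With $V=(1-\sqrt{1-8t})/4$ (so $t=V(1-2V)$), setting $z=A(w(s))=1-(1+V^2 s)^2/(1+Vs)$ clears to a quadratic in $s$, namely $V^4 s^2+(Vz-t)\,s+z=0$. Taking the branch with $s(0)=0$ and substituting into $z/w(s)=z(1+Vs)^2/(s(1+V^2 s))$, the expression collapses, after routine algebraic simplification, to~\eqref{eq:Bsol}. The sanity check $B(t,0)=1$ is immediate from $(1-U)(1+U)=8t$.

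Third, I would carry out the asymptotics by singularity analysis. The only source of non-analyticity of $B(t,z)$ in $t$ is through $U:=\sqrt{1-8t}$, with dominant singularity at $t=1/8$. Expanding the radical $\sqrt{((1+U)^2-4z)/(1-z)}$ through order $U^3$ and isolating the terms of odd order in $U$ (which alone are non-analytic in $t$), one finds
\begin{equation*}
B(t,z)=R(t,z)+\frac{2z}{(1-4z)^{3/2}(1-z)^{1/2}}\,(1-8t)^{3/2}+O((1-8t)^{5/2}),
\end{equation*}
with $R(t,z)$ analytic at $t=1/8$. The transfer theorem yields~\eqref{eq:alternating-gf-better-asymp}. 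Since the singular coefficient is analytic in $z$ near $0$, extracting $[z^p]$ commutes with the asymptotic, giving~\eqref{eq:alternating-gf-first-asymp} with $C(p)=[z^p]C(z)$ for $C(z)=\tfrac{3}{2}\tfrac{z}{\sqrt{\pi(z-1)(4z-1)^3}}$. A second singularity analysis of $C(z)$ at $z=1/4$, where $C(z)\sim\tfrac{\sqrt{3}}{4\sqrt{\pi}}(1-4z)^{-3/2}$, then produces~\eqref{eq:alternating-gf-second-asymp}.

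Finally, for $Z(p)$: substituting $t=1/8$ (so $U=0$) in~\eqref{eq:Bsol} directly yields~\eqref{eq:alternatingZ}, and analyticity of the right-hand side near $z=0$ guarantees $Z(p)<\infty$ for every $p$ as well as $Z(0)=1$. The asymptotic for $Z(p)$ follows from one last singularity analysis at $z=1/4$ applied to the singular factor $(1-4z)^{3/2}/(2\sqrt{1-z})$, whose dominant behaviour there is $(1-4z)^{3/2}/\sqrt{3}$. The main obstacle, I expect, is the rigorous combinatorial justification of the odd-corner decomposition and of the substitution identity $A(w)=w\,B(t,A(w))$---in particular, verifying bijectivity and the compatibility of boundary-colour types at each insertion, which relies on the black--white symmetry of Eulerian triangulations; the algebraic reduction to~\eqref{eq:Bsol} and the three singularity analyses are then essentially mechanical.
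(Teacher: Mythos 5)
Your overall route is the same as the paper's: decompose a general alternating-boundary Eulerian triangulation into a semi-simple core with general components attached at every other boundary vertex, turn this into a substitution relation, combine it with the $m=3$ rational parametrization of Corollary~\ref{cor:mangpar}, and finish with singularity analysis. The analytic part of your argument (the singular term $\frac{2z}{(1-4z)^{3/2}(1-z)^{1/2}}(1-8t)^{3/2}$, the transfer theorem, the two subsequent analyses at $z=1/4$, and the evaluation at $t=1/8$) matches the paper and is correct.

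However, your substitution identity $A(w)=w\,B(t,A(w))$ is off by a factor of $t$, and this breaks the derivation of \eqref{eq:Bsol}. The issue is that $A$ carries a weight $t$ per \emph{vertex}, while $B$ carries a weight $t$ per \emph{black triangle}; a triangulation with a general alternating boundary of length $2r$ and $n$ black triangles has $v=n+r+1$ vertices, so with $T(t,u):=\sum T_{n,r}t^nu^r$ one has $A(w)=tw\,T(t,tw)$, and the core decomposition reads $T(t,u)=B(t,uT(t,u))$. Substituting $u=tw$ yields $A(w)=t\,w\,B(t,A(w))$, i.e.\ $B(t,A(w(s)))=A(w(s))/(t\,w(s))$. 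With your version, the parametrized expression $z(1+Vs)^2/(s(1+V^2s))$ tends to $t$ (not $1$) as $z\to0$ along the branch $s\sim z/t$; it therefore equals $t\,B(t,z)$ and cannot ``collapse, after routine algebraic simplification, to \eqref{eq:Bsol}''---it fails the very sanity check $B(t,0)=1$ that you invoke for the target formula. The mistake is purely one of bookkeeping and is repaired by inserting the factor $t$; the paper sidesteps it by working explicitly with the intermediate triangle-weighted series $T(t,u)$ and its parametrization \eqref{eq:Tparam} before passing to \eqref{eq:Bparam}.
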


Note that $B(0,z)=\frac{1}{1-z}$, i.e.\ $B_{0,p}=1$ for all
$p \geq 0$: this corresponds to the ``star'' formed by $p$ edges, the
root corner being incident to the central vertex.

The quantity $Z(p)$ can be understood as the partition function of the probability measure on all Eulerian triangulations of the disk having a semi-simple alternating boundary of length $2p$, that assigns a weight $8^{-n}/Z(p)$ to each such triangulation that has $n$ black triangles. In particular, a random Eulerian triangulation sampled according to this law has a fixed perimeter, but not a fixed size, and the variance of that size is infinite. By analogy with similar families of random maps, we therefore call $Z(p)$ the partition function of \emph{critical Boltzmann Eulerian triangulations} with a semi-simple alternating boundary of length $2p$.

\paragraph{Outline.} In Section~\ref{sec:reminders}, we review
enumerative results about $m$-constellations with a monochromatic
boundary.  In Section~\ref{sec:funeq}, we derive a functional equation
determining the generating function of hypermaps with an alternating
boundary of prescribed length. We explain in
Section~\ref{sec:constsimp} how to solve this equation in the case of
$m$-constellations, which leads to the proof of
Theorem~\ref{thm:main}. We consider Eulerian triangulations with a
semi-simple alternating boundary in Section~\ref{sec: eul trig}, and
establish Theorem~\ref{thm:asymp}. Concluding remarks are gathered in
Section~\ref{sec:conc}.

Auxiliary material is left in the appendices: in
Appendix~\ref{sec:rootconst} we compute the generating function of
rooted $m$-constellations via two approaches; Appendix~\ref{sec:ising}
deals with the rational parametrization of the generating function of
hypermaps with a monochromatic boundary of prescribed length.

\paragraph{Acknowledgments.} We thank Marie Albenque and Grégory Miermont for useful
discussions. The work of JB is partly supported by the Agence
Nationale de la Recherche via the grant ANR-18-CE40-0033 ``Dimers''. The work of AC is supported by the ERC Advanced Grant 740943 GeoBrown.

\section{Reminders on the enumeration of constellations}
\label{sec:reminders}

In this section we review enumerative results about $m$-constellations
with a monochromatic boundary. Note that an $m$-constellation with a
white monochromatic boundary is just a rooted constellation in the
classical sense, since the boundary is a white face just like the
others, except that one of its corners is distinguished.

\begin{prop}[{\cite[Theorem~3]{bm-s}}]
  Let $(n_i)_{i \geq 1}$ be a sequence of nonnegative integers such
  that $0<\sum n_i <\infty$.  Then, the number of rooted
  $m$-constellations having $n_i$ white faces of degree $mi$ for every
  $i \geq 1$ is equal to
  \begin{equation}
    \label{eq:bms}
    \frac{m}{m-1} \cdot \frac{(v+\sum n_i-2)!}{v!} \cdot
    \prod_{i \geq 1} \frac{1}{n_i!} \binom{mi-1}i^{n_i}
  \end{equation}
  where $v=\sum (mi-i-1)n_i +2$ is the number of vertices.
\end{prop}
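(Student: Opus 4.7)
The plan is to establish~\eqref{eq:bms} via the Bousquet-Mélou--Schaeffer (BMS) bijection between rooted $m$-constellations and a family of decorated plane trees called $m$-mobiles, combined with multivariate Lagrange--Bürmann inversion. The idea is that the complicated factorial and binomial structure of~\eqref{eq:bms} is the signature of Lagrange inversion applied to a polynomial functional equation for a tree generating function, and BMS provides the bijection that makes this equation available.

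First, I would set up the BMS bijection in its version for $m$-constellations. To each vertex-pointed rooted $m$-constellation one associates an $m$-mobile: a plane tree whose vertices are of three types---white ``face-vertices'' of degree $mi$ for various $i\geq1$ (one per white face of degree $mi$, so the sequence $(n_i)$ is preserved), black ``face-vertices'' of degree exactly $m$ (one per black face), and unlabeled ``vertex-type'' leaves (one per constellation vertex)---subject to a prescribed alternation rule around each face-vertex. This bijection reduces the enumeration of rooted $m$-constellations to the enumeration of rooted mobiles, modulo an explicit universal re-rooting factor.

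Second, I would count such mobiles. Let $W(t,x_1,x_2,\dots)$ denote the generating function of $m$-mobiles rooted at a suitably marked object (for instance, an oriented edge incident to a vertex-type leaf), with $t$ marking leaves and $x_i$ marking white vertices of degree $mi$. A recursive decomposition at the root yields a polynomial functional equation of Lagrange type, in which the binomial coefficient $\binom{mi-1}{i}$ arises as the number of ways to interleave, around a white vertex of degree $mi$, the $i$ black-vertex neighbors among the $mi-1$ non-parent neighbors. Applying Lagrange--Bürmann, the coefficient of $t^v\prod_i x_i^{n_i}$ in $W$ (or a suitable variant) produces the factorial prefactor $(v+\sum n_i-2)!/(v!\prod n_i!)$ together with the product $\prod_i\binom{mi-1}{i}^{n_i}$. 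The re-rooting conversion between vertex-pointed rooted and rooted $m$-constellations contributes the universal factor $m/(m-1)$, which reduces to the Euler-type identity $e=m\cdot f_\bullet$ relating edges and black faces.

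The main obstacle is the precise bookkeeping of these re-rooting factors, especially the $m/(m-1)$ prefactor: although elementary, it is where the delicate index shifts between the mobile side and the constellation side concentrate, and it is notoriously easy to miscalculate. The BMS bijection and the Lagrange--Bürmann step are otherwise classical ingredients, and their combination produces~\eqref{eq:bms} essentially mechanically once the setup is correctly stated.
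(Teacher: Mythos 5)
The paper does not prove this proposition at all: it is imported verbatim as~\cite[Theorem~3]{bm-s}, so there is no in-paper argument to compare yours against. What the paper \emph{does} do, in the paragraph following the statement, is essentially the consistency check underlying your plan: it records the multivariate Lagrange inversion~\eqref{eq:lagV} for the series $V$ of~\eqref{eq:Vdef} and interprets $\frac{m}{m-1}(V-t)$ as the generating function of \emph{pointed} rooted constellations (citing~\cite{theseJB}). So your route---a bijection with a family of trees followed by Lagrange--B\"urmann---is legitimate and is the one the authors gesture at; note however that it is \emph{not} the original BMS proof, which relies on their own bijection with a class of trees (``Eulerian trees''), not on mobiles: mobiles are due to Bouttier--Di~Francesco--Guitter~\cite{bdg,fomap}, and attributing them to~\cite{bm-s} conflates two distinct bijections.

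As a proof, your proposal is a plan rather than an argument, and the two places where you defer the work are exactly where it is not yet correct as stated. First, the combinatorial origin you give for $\binom{mi-1}{i}$ does not match the functional equation it must produce: the tree equation behind~\eqref{eq:lagV} is $V=t+\sum_i\binom{mi-1}{i}x_iV^{(m-1)i}$, so a white node of type $i$ must carry $(m-1)i$ subtrees counted by $V$ \emph{and} a multiplicity $\binom{mi-1}{i}$; but if, as you say, one chooses the $i$ black neighbors among the $mi-1$ non-parent neighbors of an $mi$-valent vertex, the remaining non-parent, non-black neighbors number $(m-1)i-1$, not $(m-1)i$. So either the arity or the multiplicity in your mobile decomposition is off by one, and fixing this requires actually writing down the bijection for constellations (in particular how black faces, which you make $m$-valent tree vertices, get absorbed so that they do not appear in the final equation). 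Second, the factor $m/(m-1)$ is not explained by ``$e=mf_\bullet$'' alone: it arises from the conversion between pointed rooted constellations (what the mobile bijection naturally counts, contributing the extra factor $v$ visible in $\frac{(v+\sum n_i-2)!}{(v-1)!}$ versus $\frac{(v+\sum n_i-2)!}{v!}$) and rooted ones, combined with the count of admissible rootings of the mobile; this double counting is precisely the step you would need to carry out, and until it is, the constant in~\eqref{eq:bms} is unverified.
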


The above formula is closely related to the series $V$ of
\eqref{eq:Vdef}. Indeed, by the Lagrange inversion formula, or
equivalently by the cyclic lemma, we have
\begin{equation}
  \label{eq:lagV}
  \left[x_1^{n_1} \cdots x_d^{n_d}\right] V = t^{v-1}
  \frac{(v+\sum n_i-2)!}{(v-1)!} \cdot
  \prod_{i = 1}^d \frac{1}{n_i!} \binom{mi-1}i^{n_i}
\end{equation}
for $v=\sum (mi-i-1)n_i +2$. The right-hand side is precisely equal to
$\frac{m-1}{m} v t^{v-1}$ times \eqref{eq:bms}, assuming $d$ large
enough. Removing the spurious term $t$ in $V$, we may interpret
$\frac{m}{m-1}(V-t)$ as the generating function of pointed rooted
constellations, with no weight for the marked vertex. See also the
discussion in~\cite[pp.~130-132]{theseJB}. The generating function of
rooted constellations may be deduced by integrating with respect to
$t$: we perform this computation in Appendix~\ref{sec:rootconst}.

Let us now consider $m$-constellations with a white monochromatic
boundary of prescribed length $mp$, and denote by $F_p^\circ$ the
corresponding generating function, with no weight for the boundary.
It was shown in~\cite{albenque-bouttier}\footnote{In this reference
  the weight per vertex $t$ is set to $1$, but it is not difficult to
  check that it can be taken arbitrary.} that
\begin{equation}
  \frac{dF_p^\circ}{dt} = \binom{mp}{p} V^{(m-1)p}
\end{equation}
and, by integrating and performing the change of variable $t \to V$
using~\eqref{eq:Vdef}, we get
\begin{equation}
  \label{eq:monochrom}
  F_p^\circ = \binom{mp}{p} \left( \frac{V^{(m-1)p+1}}{(m-1)p+1} - \sum_{i=1}^d
    \frac{i}{p+i} \binom{mi-1}{i} x_i V^{(m-1)(p+i)} \right)
\end{equation}
which coincides with~\cite[Proposition~10]{albenque-bouttier} up to an
hypergeometric identity.

Interestingly, it is possible to collect all the
$(F_p^\circ)_{p \geq 0}$ into a single grand generating function which
admits a rational parametrization. This property holds at the more
general level of hypermaps, as we explain in Appendix~\ref{sec:ising}
by restating the results from~\cite[Chapter~8]{eynard} in our present
setting. Let us just state here the result for $m$-constellations:

\begin{prop}
  \label{prop:ratX}
  Let $X(y)$ be the formal Laurent series in $y^{-1}$ defined by
  \begin{equation}
    \label{eq:Xdef}
    X(y) := \sum_{i=1}^d x_i y^{mi-1} + \sum_{p=0}^\infty \frac{F_p^\circ}{y^{mp+1}}
  \end{equation}
  and let $x(z)$ and $y(z)$ be the Laurent polynomials
  \begin{equation}
    \label{eq:xyconst}
    x(z) := z + \sum_{k=1}^d \alpha_k z^{1-mk}, \qquad  y(z) := \frac{V}{z} + z^{m-1},
  \end{equation}
  where the $\alpha_k$ and $V$ are as in Theorem~\ref{thm:main}. Then, we have
  \begin{equation}
    X(y(z)) = x(z)
  \end{equation}
  which should be understood as a substitution of series in $z$,
  $y(z)^{-1}$ being a formal power series in $z$ without constant
  term.
\end{prop}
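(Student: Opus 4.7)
The plan is to verify $X(y(z)) = x(z)$ as an identity of Laurent series in $z$ with coefficients in $\mathbb{Q}[[t,x_1,\ldots,x_d]]$, matching coefficients power by power. Observe that $y(z)^{mi-1} = z^{1-mi}(V+z^m)^{mi-1}$ is a Laurent polynomial in $z$ supported on powers $z^{1+m\ell}$ with $\ell = -i,\ldots,(m-1)i-1$, while $y(z)^{-mp-1} = z^{mp+1}(V+z^m)^{-mp-1}$ is a power series in $z$ starting at $z^{mp+1}$. Hence only coefficients of powers $z^{1+m\ell}$, $\ell \in \mathbb{Z}$, need be checked, and the verification decouples into the negative-$\ell$ and positive-$\ell$ parts.

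For $\ell = -k$ with $k \geq 1$, only $\sum_i x_i y(z)^{mi-1}$ contributes, and a direct binomial expansion gives the coefficient of $z^{1-mk}$ as $\sum_{i \geq k} x_i \binom{mi-1}{i-k} V^{(m-1)i+k-1}$, which equals $\alpha_k$ by \eqref{eq:alphadef}, thereby matching the negative part of $x(z)$.

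For $\ell = 0$, the identity $\binom{mp}{p}/((m-1)p+1) = \binom{mp+1}{p}/(mp+1)$ shows that the first term of $F_p^\circ$ in \eqref{eq:monochrom} is exactly the Lagrange-inversion coefficient of the inverse series $z(y)$ of the change of variable $y = V/z + z^{m-1}$; namely, from $z = (V+z^m)/y$, one gets $z(y) = \sum_{p \geq 0} \frac{\binom{mp+1}{p}}{mp+1} V^{(m-1)p+1} y^{-mp-1}$. Substituting $y = y(z)$ yields $z(y(z)) = z$, which matches the single positive-power term of $x(z)$.

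It then remains to show that the positive-$z$ part of $\sum_i x_i y(z)^{mi-1}$ exactly cancels the contribution of the second term of $F_p^\circ$ in \eqref{eq:monochrom}. At the coefficient of each $x_i$, this reduces to a closed binomial identity of the form
\[
\binom{mi-1}{i+q} = i\binom{mi-1}{i} \sum_{p=0}^{q} \frac{(-1)^{q-p}}{p+i} \binom{mp}{p} \binom{(m-1)p+q}{q-p}, \qquad q \geq 0,
\]
which can be handled by creative telescoping (Zeilberger) or by a residue computation on the rational curve $y z = V + z^m$. I expect this hypergeometric identity to be the main obstacle; the cleanest alternative is to deduce the proposition as a specialization of the general rational parametrization for hypermaps with a monochromatic boundary, which we establish in Appendix~\ref{sec:ising} by adapting the two-matrix-model loop-equation analysis of~\cite{eynard}, where the positive-power cancellation emerges uniformly by construction.
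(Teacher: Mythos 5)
Your setup is correct: the support analysis, the extraction of the negative-power coefficients yielding $\alpha_k$, the identification of the sum over $p$ of the first terms of $F_p^\circ$ in \eqref{eq:monochrom} with the Lagrange inverse of $y=V/z+z^{m-1}$ (so that this part contributes exactly $z$), and the reduction of the remaining cancellation to the displayed binomial identity are all accurate (the identity checks out for $q=0$ and $q=1$). However, that identity \emph{is} essentially the entire content of the proposition once \eqref{eq:monochrom} is granted, and you do not prove it --- you only assert that creative telescoping or a residue computation should handle it. This is a genuine gap, and not a routine one: the paper itself explicitly defers the equivalence between Proposition~\ref{prop:ratX} and the explicit expression \eqref{eq:monochrom} to the forthcoming reference \cite{AB21}, so the authors regard precisely this step as requiring a separate argument. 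Until the identity is established for all $q\geq 2$ (e.g.\ by writing $F_p^\circ=-\mathrm{Res}_{z=0}\,x(z)\,y(z)^{mp}\,y'(z)$, integrating by parts, and reducing to a Vandermonde-type convolution, or by a certified Zeilberger certificate), your primary route is incomplete.

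Your fallback is in fact the paper's actual proof: Proposition~\ref{prop:ratX} is obtained in Appendix~\ref{sec:ising} as a specialization of Theorem~\ref{thm:parhyp}, itself a reformulation of Eynard's Theorem~8.3.1 via the dictionary between hypermaps and Ising-decorated maps; the only work done there is checking that the conditions \eqref{eq:condhyp} force the coefficients $a_k,b_k$ to vanish unless $m$ divides $k+1$ and reproduce \eqref{eq:Vdef} and \eqref{eq:alphadef}. Note that this route does not use \eqref{eq:monochrom} at all, whereas your direct route takes \eqref{eq:monochrom} as input; the two are logically independent, and completing your verification would actually supply the equivalence that the paper outsources to \cite{AB21}, which would be a worthwhile addition. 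One minor caution if you pursue the direct route: the coefficients of $y(z)^{-1}$ as a power series in $z$ involve negative powers of $V$, so the coefficientwise comparison takes place in $\mathbb{Q}[V^{\pm1}][[x_1,\ldots,x_d]]$ rather than in $\mathbb{Q}[[t,x_1,\ldots,x_d]]$ as you state; this is harmless but should be said.
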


It may be shown~\cite{AB21} that this proposition is equivalent to the
general expression~\eqref{eq:monochrom} for $(F_p^\circ)_{p \geq 0}$.
Remarkably, the \emph{same} Laurent polynomials $x(z)$ and $y(z)$
provide a rational parametrization for the generating function of
$m$-constellations with a \emph{black} monochromatic boundary (see
again Appendix~\ref{sec:ising} for the derivation):

\begin{prop}
  \label{prop:ratY}
  Let $F_p^\bullet$ denote the generating function of
  $m$-constellations with a black monochromatic boundary of length
  $mp$, $p \geq 1$, with no weight for the boundary. Let $Y(x)$ be the
  formal Laurent series in $x^{-1}$ defined by
  \begin{equation}
    \label{eq:Ydef}
    Y(x) = x^{m-1} + \sum_{p=0}^\infty \frac{F_p^\bullet}{x^{mp+1}}.
  \end{equation}
  Then, we have
  \begin{equation}
    Y(x(z))=y(z)
  \end{equation}
  which should be understood as a substitution of series in $z^{-1}$,
  $x(z)^{-1}$ being a formal power series in $z^{-1}$ without constant
  term.
\end{prop}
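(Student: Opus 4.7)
The plan is to mirror the derivation of Proposition~\ref{prop:ratX} (sketched in Appendix~\ref{sec:ising}) for the black-bounded generating function $Y$. Conceptually, $X$ and $Y$ are disk enumerations in the same constellation model, and together they parametrize a common algebraic curve (the ``spectral curve'') in the $(x,y)$-plane; the rational parametrization $(x(z),y(z))$ is a uniformization of this curve, with Proposition~\ref{prop:ratX} and the present statement describing its behavior at the two ends $z\to 0$ (where $y\to\infty$) and $z\to\infty$ (where $x\to\infty$) respectively.

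The first step is to derive a functional (loop) equation for $Y(x)$, analogous to the one used for $X$. I would obtain this by peeling the root edge of an $m$-constellation with a black monochromatic boundary: its inner side is incident to a white face of some degree $mi$, $i\geq 1$ (weight $x_i$), and removing this white face either produces a smaller black-bounded map (after suitable identification of the newly exposed edges) or splits the map at a pinch point into two black-bounded pieces. Summing over $i$ and over the splittings gives a closed equation for $Y(x)$, coupled to itself by a convolution and to the vertex generating function via $V$. Characterizing $Y(x)$ as a Laurent series in $x^{-1}$ with leading term $x^{m-1}$ then isolates a unique solution.

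The second step is to substitute $x=x(z)$, $y=y(z)$ into this loop equation and verify, using~\eqref{eq:Vdef} and~\eqref{eq:alphadef}, that $y(z)=V/z+z^{m-1}$ solves it. By uniqueness, this forces $Y(x(z))=y(z)$ as formal Laurent series in $z^{-1}$.

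The hard part, in my view, is not the verification (a routine algebraic computation) but the derivation of a clean loop equation, because white faces adjacent to a black boundary can have arbitrary degree $mi$, producing an infinite family of cases to organize. A slicker alternative, which I would ultimately prefer, is to invoke Proposition~\ref{prop:ratX} together with the standard two-matrix loop equations of~\cite{eynard}: these relate $X(y)$ and $Y(x)$ via a polynomial identity cutting out the spectral curve, and since $(x(z),y(z))$ already uniformizes that curve via the $X$-relation, the same parametrization automatically satisfies the $Y$-relation at the other end, once one matches the leading-term asymptotics $Y(x)=x^{m-1}+O(x^{-1})$ with $y(z)=z^{m-1}+V/z$.
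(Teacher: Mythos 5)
Your preferred route (the second one) is close in spirit to what the paper actually does, but the paper does not derive Proposition~\ref{prop:ratY} from Proposition~\ref{prop:ratX}: both propositions are obtained \emph{simultaneously} as specializations of Theorem~\ref{thm:parhyp}, which is itself a reformulation of Eynard's Theorem~8.3.1 for general hypermaps with monochromatic boundaries. The actual work in the paper's proof is the specialization computation: one checks that with the constellation weights the conditions~\eqref{eq:condhyp} force the coefficients $a_k$ and $b_k$ to vanish unless $m$ divides $k+1$, so that $x(z)$ and $y(z)$ collapse to the Laurent polynomials~\eqref{eq:xyconst} with $b_{m-1}=1$, $\alpha_k=a_{mk-1}$ as in~\eqref{eq:alphadef}, and $V$ as in~\eqref{eq:Vdef}; identifying $F_p^\bullet=W_{mp}$ then gives the statement. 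Your plan omits this step entirely. Moreover, deducing the $Y$-relation from the $X$-relation via a ``functional inverse'' identity $Y(X(y))=y$ is delicate here, because the two substitutions live in different formal regimes: $X(y(z))=x(z)$ is an identity of series in $z$ near $z=0$, while $Y(x(z))=y(z)$ is an identity of series in $z^{-1}$ near $z=\infty$, so the naive composition $Y\circ X$ is not a well-defined operation on these formal Laurent series. The inverse-function property is really a statement about the two branches of the same algebraic curve, and making it rigorous essentially amounts to re-proving Eynard's theorem, which is why the paper invokes it for both colors at once.

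Your first route (a direct peeling/loop equation for black-bounded constellations) has a concrete gap: when you expose the white face of degree $mi$ incident to the root edge of a map with boundary condition $\bullet\cdots\bullet$, the $mi-1$ newly exposed boundary edges are incident to \emph{black} inner faces, i.e.\ they carry boundary condition $\circ$. The decomposition therefore does not close on black-monochromatic boundaries; it forces you into mixed (Dobrushin-type) boundary conditions, exactly the complication discussed in the paper's introduction and in Eynard's treatment of the two-matrix model. So the equation you would obtain is not a self-contained convolution equation for $Y$ alone, and the ``routine verification'' step cannot be set up as described without first developing the full Dobrushin machinery.
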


\section{A functional equation for hypermaps with an alternating boundary}
\label{sec:funeq}

In this section we consider the general setting of hypermaps: our
purpose is to establish a functional equation determining, for all
$r \geq 0$, the generating function $A_r$ of hypermaps with an
alternating boundary of length $2r$, counted with a weight $t$ per
vertex and weight $t_i$ (resp.\ $\tilde{t}_i$) per black (resp.\
white) inner face of degree $i$ for every $i\geq 1$.

\begin{figure}
  \centering
  \includegraphics[scale=1]{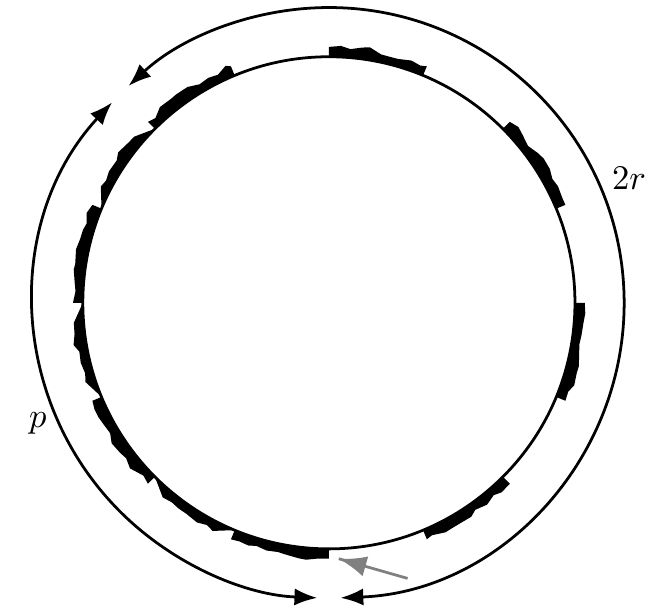}
  \caption{To obtain a functional equation for the enumeration of bicolored maps with an alternating boundary, we consider a more general boundary condition, with a monochromatic part (of length denoted by $p$) and an alternating one (of length denoted by $2r$).}
  \label{fig:layout}
\end{figure}

As it is often the case in map enumeration, it is not obvious how to
write a recursion determining the family $(A_r)_{r \geq 0}$ directly,
but we can obtain a recursion determining the more general family
$(M_{p,r})_{p,r \geq 0}$, where $M_{p,r}$ is the generating function
of hypermaps with ``mixed'' boundary condition of the form
\begin{equation}
  \label{eq:mixed}
  \underbrace{\bullet \bullet \cdots \bullet}_{\text{length } p}
  \underbrace{\circ\bullet\circ\bullet\cdots\circ\bullet}_{\text{length } 2r},
\end{equation}
see Figure~\ref{fig:layout} for an illustration. We then have
$A_r=M_{0,r}$, while $M_{p,0}$ is equal to the generating function
$W_p$ of hypermaps with a black monochromatic boundary of length
$p$. In view of Theorem~\ref{thm:parhyp} in Appendix~\ref{sec:ising}
below, $W_p$ can be considered as known, since (when degrees are
bounded) we have a rational parametrization for the grand generating
function
\begin{equation}
  W(x) := \sum_{p=0}^\infty \frac{W_p}{x^{p+1}}.
\end{equation}
Let us introduce the corresponding generating functions for $A_r$ and $M_{p,r}$:
\begin{equation}
  A(w) := \sum_{r=0}^\infty A_r w^{r+1}, \qquad
  M(x,w) := \sum_{p=0}^\infty \sum_{r=1}^\infty M_{p,r} \frac{w^r}{x^{p+1}}.
\end{equation}
Our reason for working with series in $x^{-1}$ and $w$, and for
shifting some exponents by $1$, is that it will lead to more compact
expressions below. Note that, conventionally, $A_0=M_{0,0}=W_0=t$.

\begin{figure}
  {\centering
    \includegraphics[scale=0.8]{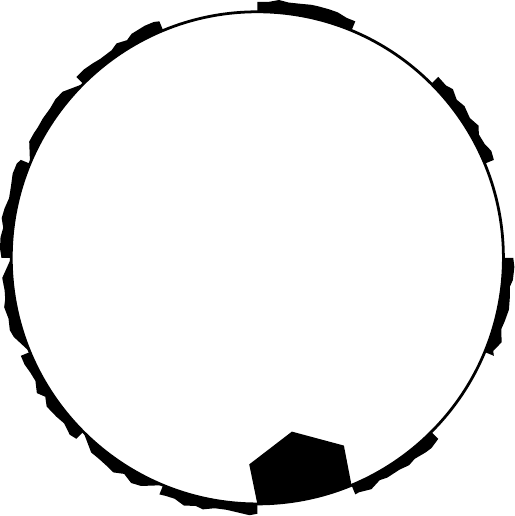}
    \hspace{2em}
    \includegraphics[scale=0.8]{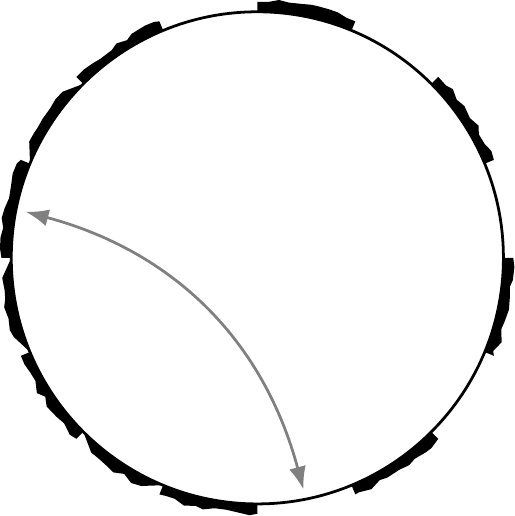}
    \hspace{2em}
    \includegraphics[scale=0.8]{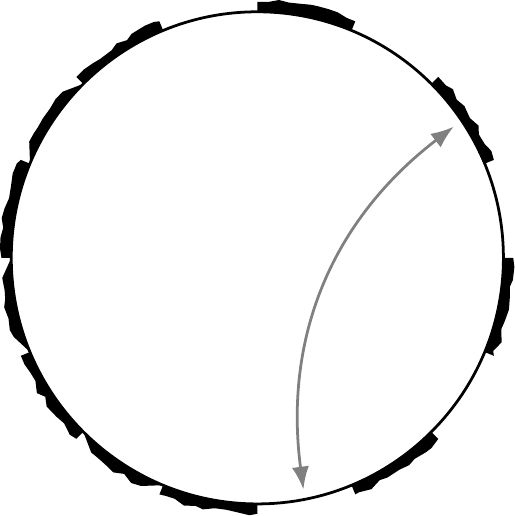} \\
    \vspace{2em}
    \includegraphics[scale=0.8]{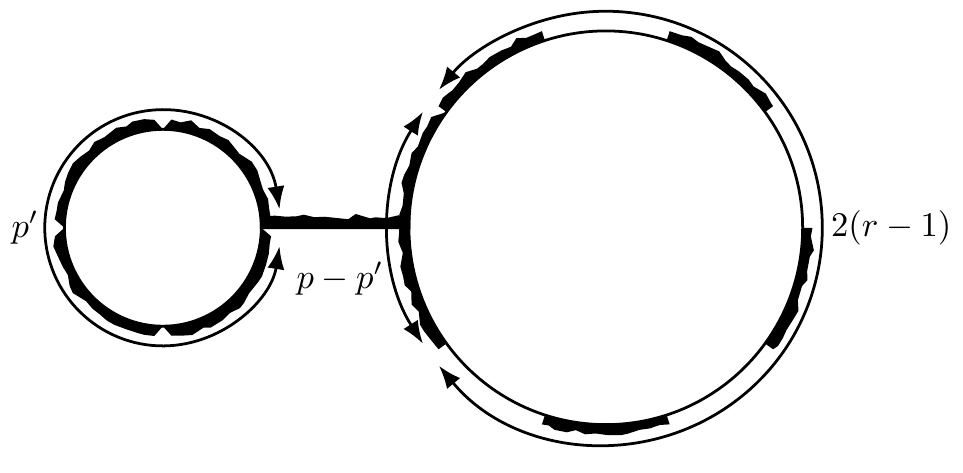}
    \hspace{2em}
    \includegraphics[scale=0.8]{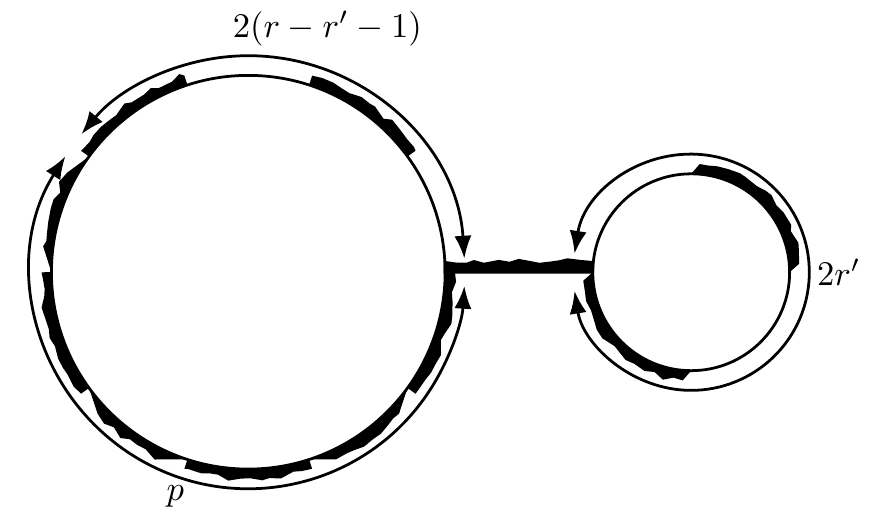}
  }
  \caption{Recursive decomposition of a hypermap with mixed boundary
    condition, as counted by $M_{p,r}$. We ``peel'' the first white
    edge of the alternating part of the boundary, and the three
    situations illustrated in the top row may occur: we may either
    discover a new black inner face (left)---which will increase the
    length of the black monochromatic boundary---or the white edge may
    be identified to a black edge, either in the monochromatic
    (middle) or the alternating (right) boundary. The bottom row
    displays the lengths of the different boundary parts in the second
    and third cases (we exclude by convention the case $r'=r-1$ on the
    right since it is the same as the case $p'=p$ on the left).}
  \label{fig:recurrence}
\end{figure}

\begin{lem}
  For $p \geq 0$ and $r \geq 1$, we have the recursion relation
  \begin{equation}
    M_{p,r} = \sum_{i \geq 1} t_i M_{p+i,r-1} + \sum_{p'=0}^p W_{p'} M_{p-p',r-1} + \sum_{r'=0}^{r-2} A_{r'} M_{p,r-r'-1}.
  \end{equation}
\end{lem}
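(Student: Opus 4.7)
The plan is to establish the recursion by a standard peeling argument applied to the distinguished $\circ$-edge at position $p+1$ of the boundary walk (the first edge of the alternating part). Since $r \geq 1$ this edge is well-defined, and the label $\circ$ forces any incident inner face to be black by the first condition in the definition of a boundary condition. The three terms on the right-hand side of the recursion will correspond to the three mutually exclusive and exhaustive possibilities for what lies on the other side of this edge, as illustrated in Figure~\ref{fig:recurrence}.

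First I would treat the case where the other side of the peeled edge reveals a new (previously unseen) inner face, necessarily black, of some degree $i \geq 1$. Erasing this face absorbs it into the outer boundary: its remaining $i-1$ edges become boundary edges carrying label $\bullet$, and, together with the original $p$ monochromatic $\bullet$-edges and the leading $\bullet$ at position $p+2$ of the old boundary, they form a black monochromatic stretch of length $p+i$, followed by the surviving alternating part of length $2(r-1)$. The resulting hypermap is counted by $M_{p+i, r-1}$, with weight $t_i$ for the erased face, and summing over $i$ yields the first term.

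Next I would treat the two cases where the peeled edge has both sides on the boundary. The second condition in the definition of a boundary condition then forces the second occurrence of this edge in the boundary walk to carry label $\bullet$, and, since in a planar map an edge both of whose sides lie in the same face is a bridge, cutting along this shared edge splits the hypermap into two sub-hypermaps. If the second occurrence lies in the monochromatic part, one sub-hypermap inherits a black monochromatic boundary of some length $p'$ (contributing $W_{p'}$) while the other inherits a mixed boundary of type $(p-p', r-1)$ (contributing $M_{p-p', r-1}$), producing the second sum. If instead it lies in the alternating part, the two sub-hypermaps have respectively an alternating boundary of length $2r'$ (contributing $A_{r'}$) and a mixed boundary of type $(p, r-r'-1)$ (contributing $M_{p, r-r'-1}$), producing the third sum.

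The main delicacy, which I expect to be the principal obstacle, will be the bookkeeping of the two degenerate pairings, where the peeled edge is identified with one of its immediate neighbours on the boundary walk (the $\bullet$ at position $p$ or the $\bullet$ at position $p+2$). In such a pairing one of the sub-hypermaps reduces to the vertex-map, so that the conventions $W_0 = t$ and $A_0 = t$ play a role; moreover these two geometric situations coincide, so some convention must be adopted to avoid double counting. Following the choice indicated in the caption of Figure~\ref{fig:recurrence}, the case $p' = p$ is retained in the second sum and the case $r' = r-1$ is excluded from the third, which is why that sum stops at $r' = r-2$. Once the inheritance of boundary labels and lengths is correctly tracked in each of the three cases, in particular in these degenerate ones, the decomposition becomes bijective and the three contributions add up to $M_{p, r}$, yielding the lemma.
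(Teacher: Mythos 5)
Your proof is correct and follows exactly the paper's argument: the paper's proof consists solely of a reference to Figure~\ref{fig:recurrence}, whose caption describes precisely this peeling of the first white edge of the alternating part, the same three cases, and the same convention of dropping $r'=r-1$ from the third sum because it duplicates the $p'=p$ case of the second. Your written version is a faithful (and more detailed) verbalization of that decomposition.
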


\begin{proof}
  See Figure~\ref{fig:recurrence}.
\end{proof}

The above recursion translates into the functional equation
\begin{multline}
  M(x,w) = w \sum_{i \geq 1} t_i \left[ x^i (M(x,w) + W(x)) \right]_{x^{<0}} \\ + w x W(x) (M(x,w) + W(x))  + A(w) M(x,w)
\end{multline}
where the notation $[\cdot]_{x^{<0}}$ means that we keep only terms
with negative powers of $x$ (since in the sum over $i$ we ``miss''
some initial terms of $M(x,w)+W(x)$). We may rewrite the functional
equation in the form
\begin{equation}
  \label{eq:kerneleq}
  K(x,w) M(x,w) = R(x,w)
\end{equation}
with
\begin{equation}
  \label{eq:KRdef}
  \begin{split}
  K(x,w) &:= 1 - A(w) - w x Y(x) \\ R(x,w) &:= w x W(x) Y(x) - w \sum_{i \geq 1} t_i
  \left[ x^i (M(x,w)+W(x)) \right]_{x^{\geq 0}}
\end{split}
\end{equation}
where the notation $[\cdot]_{x^{\geq 0}}$ should be self-explanatory
and 
\begin{equation}
  Y(x) :=  \sum_{i \geq 1} t_i x^{i-1} + W(x)
\end{equation}
is the same as in Theorem~\ref{thm:parhyp}.

We recognize a functional equation in one catalytic variable $x$,
which is linear in $M(x,w)$ and thus amenable to the kernel
method. When the $t_i$ vanish for $i$ large enough, we may rewrite the
equation in a form which allows to apply~\cite[Theorem~3]{BoJe06} and
conclude that $M(x,w)$, hence $A(w)$, is algebraic in $w$, $x$, $W(x)$
and the $t_i$. We will however not attempt to work out an explicit
expression for $A(w)$ in general, but we will rather concentrate on
the case of $m$-constellations for which there is a simplification,
and which is sufficient for the application we have in mind.

\section{Simplification in the case of $m$-constellations}
\label{sec:constsimp}

We now specialize the face weights to
\begin{equation}
  t_i =
  \begin{cases}
    1 & \text{if $i=m$,}\\
    0 & \text{otherwise,}
  \end{cases} \qquad
  \tilde{t}_i =
  \begin{cases}
    x_{i/m} &  \text{if $m$ divides $i$,}\\
    0 & \text{otherwise.}
  \end{cases}
\end{equation}
A simplification occurs due to the following:

\begin{lem}
  In the case of constellations, we have $M_{p,r}=0$ unless $m$ divides $p$.
\end{lem}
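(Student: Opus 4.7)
The approach is to exploit the canonical coloring of the vertices of an $m$-constellation by elements of $\mathbf{Z}/m\mathbf{Z}$: such a coloring exists precisely because each inner black face has degree $m$ and each inner white face has degree a multiple of $m$, and it has the property that every edge is oriented from a vertex of color $c$ to a vertex of color $c+1 \pmod m$, with the cyclic sequence of colors around each inner black face being $0, 1, \dots, m-1$ and around each inner white face of degree $mi$ being its reverse repeated $i$ times.

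Starting from a hypermap counted by $M_{p,r}$, I would walk along the boundary face in the direction induced by the root corner and sum the vertex-color increments along the way. Since the walk is closed, the total increment vanishes modulo $m$. With consistent orientation conventions, each boundary edge incident to an inner white face (those positions with $w_i = \bullet$) contributes $+1$, each edge incident to an inner black face ($w_i = \circ$) contributes $-1$, and each edge with both sides on the boundary contributes net zero, since it appears in the word once with $w_i = \bullet$ and once with $w_i = \circ$. Hence the total increment equals $n_\bullet - n_\circ$ modulo $m$, where $n_\bullet$ and $n_\circ$ count the $\bullet$ and $\circ$ letters of the boundary word. For the mixed word $\bullet^p (\circ\bullet)^r$, one reads off $n_\bullet - n_\circ = (p + r) - r = p$, so the necessary condition $m \mid p$ for a nonzero $M_{p,r}$ drops out.

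The main subtle point I anticipate is fixing the orientation conventions in a coherent way, so that the inner-face walks and the boundary walk genuinely yield the announced signs. A cleaner alternative that avoids any orientation bookkeeping is a strong induction on $r$ based on the recurrence of the previous lemma: under the present specialization $t_i = 1$ if $i = m$ and $0$ otherwise, the right-hand side reads $M_{p+m,r-1} + \sum_{p'=0}^p W_{p'} M_{p-p',r-1} + \sum_{r'=0}^{r-2} A_{r'} M_{p,r-r'-1}$, and every nonzero term involves a series indexed by a value $p'$ congruent to $p$ modulo $m$ (using that $W_{p'}$ forces $m \mid p'$ so that $p - p' \equiv p \pmod m$). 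Starting from the base case $r = 0$, where $W_{p'}$ vanishes for $m \nmid p'$ by the convention in Section~\ref{sec:reminders} that $F_q^\bullet$ counts $m$-constellations of boundary length $mq$, the statement $M_{p,r} = 0$ for $m \nmid p$ then propagates to all $r$ by induction.
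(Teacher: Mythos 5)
Your first argument is essentially the paper's own proof: the mod-$m$ vertex coloring is exactly the potential whose existence is equivalent to the cycle condition used in the paper ($m$ divides the difference between clockwise- and counterclockwise-oriented edges along any cycle, for the ``white on the right'' orientation), and summing the color increments along the boundary contour is the same as applying that condition to the boundary; your sign bookkeeping, including the cancellation for edges incident to the boundary on both sides, matches. Your alternative inductive argument is a genuinely different and more elementary route, with one caveat: its base case $r=0$ is precisely the assertion $W_{p}=M_{p,0}=0$ for $m\nmid p$, i.e.\ the $r=0$ instance of the lemma itself, so it cannot be dismissed as a mere notational convention of Section~2. It is, however, easy to establish directly (every edge of a hypermap with a boundary has exactly one effectively black and one effectively white side, so equating the two side counts gives $\#\bullet-\#\circ\equiv 0 \pmod m$ for the boundary word --- an argument which in fact proves the full lemma at once). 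With that base case supplied, the induction buys a proof that uses only the peeling recursion and no geometric input.
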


\begin{proof}
  A hypermap with a boundary can be endowed with a canonical
  orientation of it edges, by orienting each edge in order to have,
  say, white on its right (using the boundary condition for the
  boundary edges). Assume that the hypermap is an $m$-constellation,
  and consider a cycle, not necessarily oriented: we denote by $a$
  (resp.\ $b$) the number of clockwise-oriented (resp.\
  counterclockwise-oriented) edges along it. We claim that $m$ divides
  $a-b$, which may be checked by induction on the number of faces
  encircled by the cycle. Applying this property to the contour of the
  boundary (or rather, of each of its simple components), the result
  follows.
\end{proof}

The above lemma implies that
\begin{equation}
  \label{eq:keyprop}
  M(x,w)+W(x)=\frac{A(w)}{w x}+O\left(\frac{1}{x^{m+1}}\right)
\end{equation}
which allows to rewrite the quantity $R(x,w)$ of~\eqref{eq:KRdef} as
\begin{equation}
  \label{eq:Rsimp}
  R(x,w) = w x W(x) Y(x) - x^{m-1} A(w) = w x Y(x)^2 - w x^m Y(x) - x^{m-1} A(w).
\end{equation}
We now apply the kernel method, or rather a slight variant as we
explain in Remark~\ref{rem:kernel} below. We claim that there exists a
unique formal power series $\omega(x)$ in $x^{-1}$ without constant
term such that
\begin{equation}
  K(x,\omega(x)) = 0.
\end{equation}
Indeed, this condition can be rewritten
\begin{equation}
  \omega(x) = \frac{1-A(\omega(x))}{x Y(x)} = x^{-m} \frac{1-A(\omega(x))}{1 + x^{1-m} W(x)}
\end{equation}
which fixes inductively all the coefficients of $\omega(x)$, that
turns out to be a series in $x^{-m}$. Then, by~\eqref{eq:kerneleq} and
since the substitution $M(x,\omega(x))$ is well-defined, we deduce
that
\begin{equation}
  R(x,\omega(x)) = 0.
\end{equation}
This gives a linear system of two equations determining $\omega(x)$
and $A(\omega(x))$ as
\begin{equation}
  \label{eq:omegaA}
  \omega(x) = \frac{x^{m-2}}{Y(x)^2}, \qquad A(\omega(x)) = 1 - \frac{x^{m-1}}{Y(x)}.
\end{equation}

At this stage, we assume that there exists $d$ such that $x_i=0$ for
$i>d$ (note that we have not used this assumption so far). This allows
to plug in the rational parametrization of $Y(x)$ given by
Proposition~\ref{prop:ratY}. We find
\begin{equation}
  \label{eq:ratom}
  \omega(x(z)) = \frac{x(z)^{m-2}}{y(z)^2}, \qquad A(\omega(x(z))) = 1 - \frac{x(z)^{m-1}}{y(z)}
\end{equation}
with $x(z)$ and $y(z)$ given by~\eqref{eq:xyconst}.  We now observe
that both right-hand sides are actually series in $s=z^{-m}$ and,
setting $w(s)=\omega(x(z))$, we obtain the rational parametrization
announced in Theorem~\ref{thm:main}, which is thereby established.
We conclude this section by some remarks.

\begin{rem}
  Even though we assume that there is a bound on faces degrees in
  Theorem~\ref{thm:main}, we may take formally the limit
  $d \to \infty$: by reversion we find that $A_r$ is a polynomial in
  $V,\alpha_1,\ldots,\alpha_{r+1}$, and each of these quantities is a
  well-defined series in $t,x_1,x_2,\ldots$.
\end{rem}

\begin{rem}
  \label{rem:kernel}
  In the usual kernel method, one would rather look for series
  $\xi(w)$ such that $K(\xi(w),w)=0$. This would certain succeed but
  at the price of some complications, since we would find $m$
  different Puiseux series in $w^{1/m}$, namely the roots of the
  equation $\omega(\xi(w))=w$. The reason why our variant works is
  that $K$ and $R$ involve only one unknown series $A(w)$. This
  property seems specific to the case of $m$-constellations: indeed,
  the simplification hinges upon the relation~\eqref{eq:keyprop},
  which would fail if we allowed for faces of degree not divisible by
  $m$, and upon the fact that the sum over $i$ in~\eqref{eq:KRdef}
  involves only a single nonzero term for $i=m$, which would not be
  the case if we allowed for black faces of degree higher than $m$.
\end{rem}

\begin{figure}
\centering
\includegraphics[width=\textwidth]{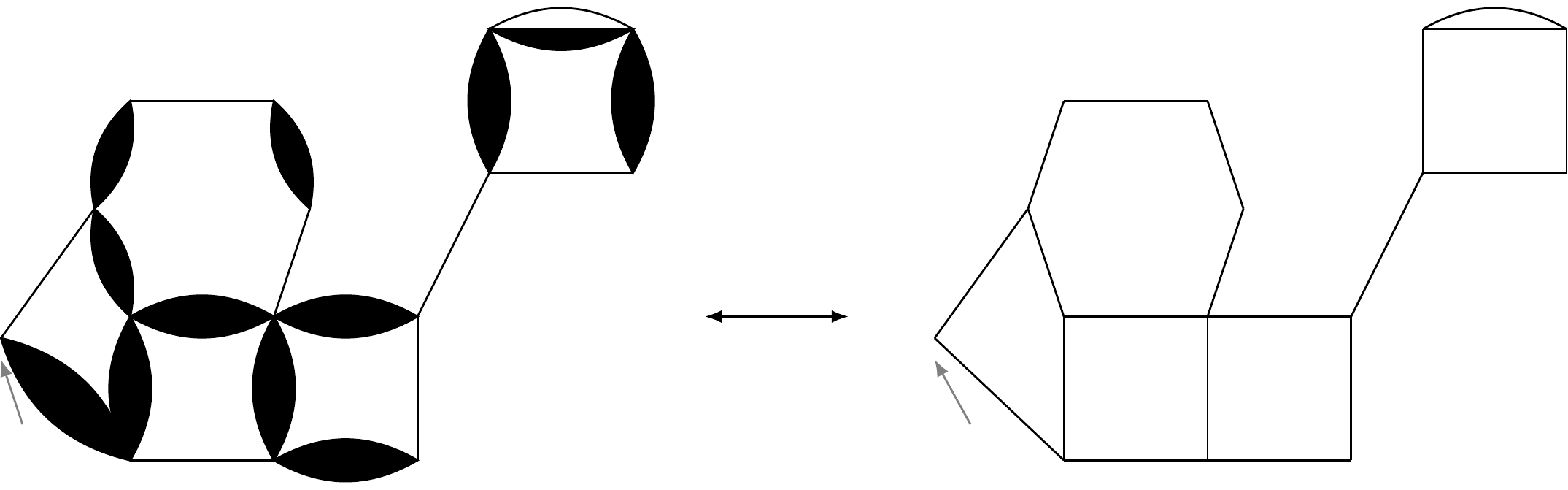}
\caption{Bijection between 2-constellations with an alternating
  boundary and bipartite planar maps with a boundary.}
\label{fig:bipartite}
\end{figure}

\begin{rem}
  \label{rem:bipcase}
  Considering the case $m=2$ provides a nontrivial consistency check.
  Indeed, as displayed on Figure~\ref{fig:bipartite},
  $2$-constellations with an alternating boundary may be identified
  with bipartite planar maps with a boundary, whose enumeration is
  well-known. The rational parametrization~\eqref{eq:ratom} takes the
  form
  \begin{equation}
    A\left(\frac{1}{y(z)^2}\right) = 1 - \frac{x(z)}{y(z)}.
  \end{equation}
  To match this expression with known formulas, we shall
  consider the modified generating function
  $\tilde{A}(y)=\sum_{r \geq 0} \frac{A_r}{y^{2r+1}}=y
  A\left(\frac1{y^{2}}\right)$. It then admits the rational
  parametrization
  \begin{equation}
    \tilde{A}(y(z))=y(z)-x(z)= \frac{V}z - \sum_{k \geq 1} \frac{\alpha_k}{z^{2k-1}}
  \end{equation}
  which matches that given in~\cite[Theorem~3.1.3]{eynard}, up to the
  change of variable $z \to \gamma z$, $V=\gamma^2$.
\end{rem}

\begin{rem}
  Taking $x=x_1$ and $w=\omega(x_2)$ in~\eqref{eq:kerneleq}, and
  using~\eqref{eq:Rsimp} and~\eqref{eq:omegaA}, we find that the
  generating function $M(x,w)$ of $m$-constellations with a mixed
  boundary condition~\eqref{eq:mixed} takes the parametric form
  \begin{equation}
    M(x_1,\omega(x_2)) = \frac1{x_1} \left(\frac{(x_1/x_2)^m-1}{
        x_1 Y(x_1) - x_2 Y(x_2)} (x_2 Y(x_2))^2 + x_1^m
      - x_1 Y(x_1) - x_2 Y(x_2)\right)
  \end{equation}
  and a rational parametrization follows by further substituting
  $x_1=x(z_1)$ and $x_2=x(z_2)$.
\end{rem}

\section{Asymptotics for Eulerian triangulations}
\label{sec: eul trig}

The purpose of this section is to establish Theorem~\ref{thm:asymp}.
Let us denote by $T_{n,r}$ the number of Eulerian triangulations with
a (non necessarily semi-simple) alternating boundary of length $2r$
having $n$ black triangles. We form the generating function
\begin{equation}
  T(t,u):= \sum_{n \geq 0} \sum_{r \geq 0} T_{n,r} t^n u^r.
\end{equation}
In the series $A$ of Theorem~\ref{thm:main}, we rather attach a weight
$t$ to vertices. But, in a triangulation counted by $T_{n,r}$, the
number of vertices $v$ is given by
\begin{equation}
  v = n + r + 1
\end{equation}
(observe that there are $n$ white triangles and $3n+r$ edges, then
apply Euler's relation). Thus, the series $A$ specialized to the case
of Eulerian triangulations satisfies
\begin{equation}
  A(w) = t w T(t,tw)
\end{equation}
and it follows from the case $m=3$ of Corollary~\ref{cor:mangpar},
upon taking $w=w(W/V)$ with $W$ a new variable, that $T$ admits the
rational parametrization
\begin{equation}
  \label{eq:Tparam}
  T\left( V-2V^2, \frac{W(1-2V)(1+VW)}{(1+W)^2}\right) =
  \frac{(1+W)(1-2V-V^2W)}{(1-2V)(1+VW)}.
\end{equation}

Let us now make the connection with the generating function $B(t,z)$
of Eulerian triangulations with a semi-simple alternating boundary, as
defined in Theorem~\ref{thm:asymp}. Considering that a triangulation
with a general boundary can be decomposed into a semi-simple ``core''
(which is the semi-simple component containing the root) and general
components attached to every other vertex on the boundary, we obtain the
substitution relation
\begin{equation}
  T(t,u)=B(t,uT(t,u)).
\end{equation}
Combining with~\eqref{eq:Tparam}, we obtain that $B(t,z)$ admits the
rational parametrization
\begin{equation}
  \label{eq:Bparam}
  B\left( V-2V^2, \frac{W(1-2V-V^2W)}{1+W}\right) =
  \frac{(1+W)(1-2V-V^2W)}{(1-2V)(1+VW)}.
\end{equation}
By eliminating $V$ and $W$, we find that $B(t,z)$ must be a root of
the algebraic equation:
\begin{multline}
16t^3(z-1)^2B^4-32t^2(z-1)^2(t+z)B^3\\
+t(z-1)(24t^2z+32tz^2+16z^3-16t^2-52tz-16z^2-z)B^2\\
-z(z-1)(8t^2-20t-1)(t+z)B+z^2(t+1)^3=0.
\end{multline}
By solving this quartic equation and picking the unique solution which
is a formal power series in $t$ (the others being Laurent series with
negative order), we obtain the expression~\eqref{eq:Bsol} for
$B(t,z)$. The first few terms read
\begin{multline}
  B(t,z) = \left(1+z+z^2+z^3+z^4+z^5+\cdots\right)+t \left(z+2 z^2+3 z^3+4 z^4+5
    z^5+\cdots\right) \\
  +t^2 \left(3 z+8 z^2+15 z^3+24 z^4+35
   z^5+\cdots\right) + t^3 \left(12 z+38 z^2+83 z^3+152 z^4+250
   z^5+\cdots\right) \\
  +t^4 \left(56 z+199 z^2+486 z^3+988 z^4+1790
    z^5+\cdots\right)+ \\
  +t^5 \left(288 z+1112 z^2+2958 z^3+6536 z^4+12822
   z^5+\cdots\right)+\cdots.
\end{multline}

The asymptotics are then deduced by standard methods of analytic
combinatorics. For fixed $z\in (0,1/4)$, the dominant singularity of
$B(t,z)$ is at $t=1/8$ with
\begin{equation}
  B(t,z) - c_0(z) - c_1(z) t \sim
  \frac{2z}{\sqrt{(z-1)(4z-1)^3}} (1-8t)^{3/2}, \qquad t \to \frac18
\end{equation}
for some functions $c_0(z)$ and $c_1(z)$.  We then use the transfer
theorem~\cite[Corollary VI.1]{flajolet-sedgewick} to obtain
\eqref{eq:alternating-gf-better-asymp}, which immediately yields
\eqref{eq:alternating-gf-first-asymp}.  Then, as the function of $z$
appearing in \eqref{eq:alternating-gf-better-asymp} is algebraic and
has a unique dominant singularity at $z=1/4$, we can apply the
transfer theorem once again to get
\eqref{eq:alternating-gf-second-asymp}.  The
asymptotics~\eqref{eq:alternatingZ} for $Z(p)$ are obtained by a
similar reasoning.  All these computations may be checked using the
companion Maple file available on the web page of the second author,
and the proof of Theorem~\ref{thm:asymp} is now complete.

\section{Conclusion}
\label{sec:conc}

In this paper, we have shown how to enumerate hypermaps with an
alternating boundary, explicited the solution in the case of
$m$-constellations, and obtained explicit and asymptotic results in
the case of Eulerian triangulations with a semi-simple alternating
boundary, crucial in the analysis of the layer decomposition of
Eulerian triangulations~\cite{carrance-trig-eul}.

Our approach relies on the framework of functional equations with
catalytic variables arising from the recursive ``peeling''
decomposition of maps. This framework is closely related to the
so-called topological recursion for maps: in a nutshell, the core idea
is that, given a family of maps defined by some ``bulk'' conditions
(degree-dependent weights, etc), the corresponding generating
functions for all sorts of topologies and boundary conditions can be
constructed as functions defined on a common algebraic curve known as
the ``spectral curve''. In the context of hypermaps (or equivalently
the Ising model on maps), this idea is discussed
in~\cite[Chapter~8]{eynard}, and the spectral curve is nothing but the
algebraic curve $(x(z),y(z))_{z \in \mathbb{C} \cup \{\infty\}}$ of
genus zero that we have been using in this paper. Our results show
that alternating boundaries fit nicely in this context, at least in
the case of $m$-constellations, since we see from~\eqref{eq:ratom}
that our series $A(w)$ ``lives'' on the same spectral curve.

Let us now list some possible directions for future research. First,
we have explicited the solution of the functional equation of
Section~\ref{sec:funeq} only in the case of $m$-constellations: it
might be instructive to treat other cases, such as the Ising model on
four-regular maps, which would correspond to taking $t_2=\tilde{t}_2$
and $t_4=\tilde{t}_4$ nonzero, and all others $t_i$ and $\tilde{t}_i$
zero.

Second, one may of course study other types of boundary conditions for
hypermaps. In fact, it is widely believed that, in order to understand
the metric properties of random maps decorated with an Ising model,
one should be able to handle general---or at least
``typical''---boundary conditions. Besides this ambitious research
program, some specific cases are of interest. As discussed
in~\cite[Chapters 8 and 9]{theseAC}, considering alternating
boundaries with defects is relevant to the study of metric balls in
the Upper Half-Plane Eulerian Triangulation, and our approach extends
nicely to this setting. In another direction, if one tries to extend
the layer decomposition to, say, Eulerian quadrangulations, then one
is led to consider boundary conditions which are obtained by repeating
two possible patterns, namely $\circ\bullet$ and
$\mathord{\circ\circ}\mathord{\bullet\bullet}$. Handling these
generalized alternating boundaries represents a new technical
challenge, as one should embed them in a family of peeling-invariant
boundary conditions extending~\eqref{eq:mixed}.

Finally, as mentioned in the introduction, and in view of the quite
simple rational parametrization of Corollary~\ref{cor:mangpar}, it is
natural to ask whether our results could be derived by a bijective
approach.

\appendix

\section{The generating function of rooted $m$-constellations}
\label{sec:rootconst}

Let $C$ denote the generating function of rooted $m$-constellations
(i.e., $m$-constellations with a white monochromatic boundary of
arbitrary length), counted with a weight $t$ per vertex and a weight
$x_i$ per white face of degree $mi$ for every $i \geq 1$ (including
the boundary face). As discussed in Section~\ref{sec:reminders}, we
have
\begin{equation}
  \label{eq:Cintform}
  C = \frac{m}{m-1} \int (V-t) dt.
\end{equation}
Here and in the following, there are no integration constants since
$[t^0]C=0$.  On the other hand, recalling the definition of $A_1$ in
Theorem~\ref{thm:main}, we have
\begin{equation}
  C = A_1 - t^2,
\end{equation}
since there is an obvious bijection between rooted $m$-constellations
and $m$-constellations with boundary condition $\bullet\circ$ not
reduced to a single edge. From the rational parametrization, we find
that
\begin{equation}
  A_1 = V^2 - (2m-3) V \alpha_1 + \frac{m-1}2 \left( (m-2) \alpha_1^2-2\alpha_2 \right).
\end{equation}
Using
\begin{equation}
  t=V-(m-1)\alpha_1,
\end{equation}
we deduce that
\begin{equation}
  \label{eq:Cnoint}
  C = \frac{m \alpha_1}2 \left(2V - (m-1)\alpha_1\right) - (m-1)(V \alpha_1+\alpha_2).
\end{equation}
Let us check the consistency of this expression
with~\eqref{eq:Cintform}. For this, we write
\begin{equation}
  \int V dt = \int V dV - (m-1) \int V d\alpha_1
\end{equation}
and hence
\begin{equation}
  \begin{split}
    \frac{m}{m-1} \int (V-t) dt &= \frac{m}{2(m-1)} (V^2 - t^2) - m \int V d\alpha_1 \\
    &= \frac{m}2 \alpha_1 \left(2V - (m-1)\alpha_1\right) - m \int V
    d\alpha_1.
  \end{split}
\end{equation}
Thus, comparing with~\eqref{eq:Cnoint}, we should have
\begin{equation}
  m \int V d\alpha_1 = (m-1) (V \alpha_1 + \alpha_2).
\end{equation}
This is easy to check using the definition~\eqref{eq:alphadef} of the
$\alpha_k$, performing the integration term by term and doing simple
manipulations of binomial coefficients.

\begin{rem}
  The expression~\eqref{eq:Cnoint} is ``canonical'' as
  $V,\alpha_1,\ldots,\alpha_d$ are algebraically independent. Indeed,
  the formal variables $t,x_1,\ldots,x_d$ are by definition
  independent and they can be recovered polynomially from
  $V,\alpha_1,\ldots,\alpha_d$.
\end{rem}

\section{Rational parametrizations for monochromatic boundaries}
\label{sec:ising}

\paragraph{Hypermaps and the Ising model.}

We consider generating functions of hypermaps, counted with a weight
$t$ per vertex and a weight $t_i$ (resp.\ $\tilde{t}_i$) per black
(resp.\ white) inner face of degree $i$ for every $i=2,\ldots,d$
(resp.\ $i=2,\ldots,\tilde{d}$), where $d$ and $\tilde{d}$ are fixed
integers larger than $1$. In order to stick with the setting
of~\cite{eynard}, inner faces of degree one are forbidden, but there
would be no problem allowing them.

We denote by $W_p$ (resp.\ $\tilde{W}_p$) the generating function of
hypermaps with a black (resp.\ white) monochromatic boundary of length
$p$, $p \geq 0$. We have conventionally $W_0=\tilde{W}_0=t$.

\begin{thm}[{reformulation of~\cite[Theorem~8.3.1]{eynard}}]
  \label{thm:parhyp}
  Let $Y(x)$ and $X(y)$ be the formal Laurent series in $x^{-1}$ and
  $y^{-1}$, respectively, defined by
  \begin{equation}
    \label{eq:YXhyp}
    Y(x) := \sum_{i=2}^d t_i x^{i-1} + \sum_{p=0}^\infty \frac{W_p}{x^{p+1}}, \qquad
    X(y) := \sum_{i=2}^{\tilde{d}} \tilde{t}_i y^{i-1} + \sum_{p=0}^\infty \frac{\tilde{W}_p}{y^{p+1}}
  \end{equation}
  and let $x(z)$ and $y(z)$ be the Laurent polynomials
  \begin{equation}
    \label{eq:xyhyp}
    x(z) := z + \sum_{k=0}^{\tilde{d}-1} a_k z^{-k}, \qquad
    y(z) := \frac{V}z + \sum_{k=0}^{d-1} b_k z^k
  \end{equation}
  where $V,a_0,\ldots,a_{\tilde{d}-1},b_0,\ldots,b_{d-1}$ are the
  series in
  $t,t_2,\ldots,t_d,\tilde{t}_2,\ldots,\tilde{t}_{\tilde{d}}$
  determined by the conditions
  \begin{equation}
    \label{eq:condhyp}
    y(z) - \sum_{i=2}^d t_i x(z)^{i-1} = t z^{-1} + O(z^{-2}), \qquad
    x(z) - \sum_{i=2}^{\tilde{d}} \tilde{t}_i y(z)^{i-1} = \frac{t z}{V} + O(z^2).
  \end{equation}
  Then, we have
  \begin{equation}
    X(y(z))=x(z), \qquad Y(x(z))=y(z)
  \end{equation}
  which should be understood as substitutions of series in $z$ and
  $z^{-1}$ respectively, $y(z)^{-1}$ (resp.\ $x(z)^{-1}$) being a
  formal power series in $z$ (resp.\ $z^{-1}$) without constant term.
\end{thm}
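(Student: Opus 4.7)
The plan is to establish Theorem~\ref{thm:parhyp} as a combinatorial reformulation of the loop equations for the two-matrix model \cite[Chapter~8]{eynard}, which is the natural framework since hypermaps with monochromatic boundaries are in bijection with the maps underlying the Ising model. The first step is to derive Tutte-style functional equations (``loop equations'') for $Y(x)$ and $X(y)$ through peeling. Removing the root edge of a hypermap with a black monochromatic boundary of length $p \geq 1$ either uncovers a white inner face of some degree $i$, in which case the remaining boundary is mixed with one white segment of length $i-1$ and one black segment of length $p-1$, or it identifies the root edge with another boundary edge, splitting the map into two pieces each with a black monochromatic boundary. A further peeling of the white segment allows one to close the recursion back onto $Y$ and $X$. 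Properly summed, this recursion translates into the structural statement that the Laurent series $X(Y(x))$ in $x^{-1}$ is in fact equal to $x$ plus an explicit polynomial in $x$ whose coefficients are the weights $t_i, \tilde t_i$, and symmetrically for $Y(X(y))$.

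Next I would deal with the rational ansatz itself. The conditions~\eqref{eq:condhyp} uniquely determine $V, a_0, \ldots, a_{\tilde d - 1}, b_0, \ldots, b_{d-1}$ as formal power series in $t, t_2, \ldots, t_d, \tilde t_2, \ldots, \tilde t_{\tilde d}$: when all the face weights vanish, the system collapses to $V = t$ and $a_k = b_k = 0$, and an order-by-order perturbation in the weights reduces at each step to a non-degenerate triangular linear system. The meaning of~\eqref{eq:condhyp} is that $y(z) - \sum_i t_i x(z)^{i-1}$, which is \emph{a priori} a Laurent polynomial in $z$, is forced to reduce to $t/z$; this matches exactly the pole structure of $Y(x) - \sum_i t_i x^{i-1}$ at $x = \infty$ under the tentative identification $x \leftrightarrow x(z)$, $Y(x) \leftrightarrow y(z)$ at large $|z|$, and the symmetric statement holds for $X$.

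The final step is to match $X(y(z)) = x(z)$ and $Y(x(z)) = y(z)$. I would introduce candidate Laurent series $\hat Y(x), \hat X(y)$ by the formal substitutions $\hat Y(x(z)) := y(z)$ and $\hat X(y(z)) := x(z)$, which are well defined since $x(z)^{-1}$ (resp.\ $y(z)^{-1}$) is a formal power series in $z^{-1}$ (resp.\ $z$) without constant term. By construction $\hat X, \hat Y$ have the Laurent-series shape prescribed by~\eqref{eq:YXhyp}, and one checks directly that they satisfy the same loop equations as $X, Y$: the structural conditions extracted in the first step translate, upon substitution $x = x(z)$, $y = y(z)$, precisely into~\eqref{eq:condhyp}. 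Uniqueness of the formal-series solution to the loop equations with the prescribed pole behaviour then forces $\hat X = X$ and $\hat Y = Y$.

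The main obstacle is the bookkeeping of the peeling decomposition: producing a closed loop equation on the purely monochromatic generating functions requires chaining several peeling steps to absorb the mixed-boundary intermediate stages, and one must check the resulting polynomial identities are compatible with the residue normalisations in~\eqref{eq:condhyp}. A short-cut, which I would adopt in the paper, is to invoke \cite[Theorem~8.3.1]{eynard} through the standard dictionary between the two-matrix model resolvents and the generating functions $Y(x), X(y)$, thereby reducing the proof to a direct verification that the rational parametrization~\eqref{eq:xyhyp}--\eqref{eq:condhyp} coincides with the genus-zero spectral curve in the language adopted here.
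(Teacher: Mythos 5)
Your preferred route---reducing the statement to \cite[Theorem~8.3.1]{eynard}---is exactly the one the paper takes, but the phrase ``standard dictionary between the two-matrix model resolvents and the generating functions $Y(x),X(y)$'' hides the only step that actually requires work, and that step is not standard. Eynard's theorem concerns Ising-decorated maps whose faces all have degree at least $3$, with three edge couplings $c_{++},c_{--},c_{+-}$, whereas hypermaps have bivalent faces and no edge weights. The paper's proof consists precisely in constructing this correspondence: one collapses the bivalent faces of the hypermap, sums the resulting geometric series to obtain effective couplings $c_{++}=\tilde{t}_2/(1-t_2\tilde{t}_2)$, $c_{--}=t_2/(1-t_2\tilde{t}_2)$, $c_{+-}=1/(1-t_2\tilde{t}_2)$, observes that the coupling matrix is the inverse of the matrix with diagonal entries $-t_2,-\tilde{t}_2$ and off-diagonal entries $1$, so that Eynard's parameters are identified as $a=-t_2$, $b=-\tilde{t}_2$, $c=-1$, and then rescales $z\to z/\gamma$ with $\gamma^2=V$ to reach the normalization \eqref{eq:xyhyp}--\eqref{eq:condhyp}; finally $X(y(z))=x(z)$ is deduced from $Y(x(z))=y(z)$ by exchanging the colors and reparametrizing $z\to V/z$. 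None of this appears in your proposal, so the reduction you invoke is asserted rather than established.

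Your alternative first route (peeling, loop equations, uniqueness of the formal solution) is a genuinely different and more self-contained strategy, but as sketched it does not close: peeling an edge of a black monochromatic boundary creates a Dobrushin-type mixed boundary (one white arc and one black arc), so the recursion involves the two-catalytic-variable generating function of such maps and not only $Y$ and $X$; one must either solve for that intermediate object as well (as is done in \cite{eynard,ct}) or explain how to eliminate it, and the uniqueness of the solution with the prescribed pole behaviour also requires an order-by-order argument in the face weights. Since you ultimately defer to the citation shortcut, the missing bivalent-face dictionary described above is the substantive gap to fill.
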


Since we have reformulated slightly the statement given
in~\cite{eynard}, let us explain the connection. Eynard does not quite
consider hypermaps, but rather planar maps whose faces have degree at
least $3$ and carry Ising ($+$ or $-$) spins, and where each edge
receives a weight $c_{++}$ (resp.\ $c_{--}$, $c_{+-}$) if it is
incident to two $+$ faces (resp.\ two $-$ faces, one $+$ and one $-$
face). There is a standard one-to-many correspondence between such
maps and hypermaps, obtained by ``collapsing'' the bivalent (degree
$2$) faces of the hypermaps and attaching a spin $+$ (resp.\ $-$) to
the remaining black (resp.\ white) faces. In this correspondence, two
adjacent $+$ faces were necessarily separated by an odd number of
bivalent faces, with one more white than black bivalent face. Thus,
the effective weight per $++$ edge is
\begin{equation}
  c_{++} = \frac{\tilde{t}_2}{1-t_2 \tilde{t}_2}.
\end{equation}
Similarly, for the $--$ and $+-$ edges we get effective weights
\begin{equation}
  c_{--} = \frac{t_2}{1-t_2 \tilde{t}_2}, \qquad c_{+-} = \frac{1}{1-t_2 \tilde{t}_2}.
\end{equation}
All these relations can be put in matrix form
\begin{equation}
  \begin{pmatrix}
    c_{++} & c_{+-} \\ c_{+-} & c_{--}
  \end{pmatrix} =
  \begin{pmatrix}
    -t_2 & 1 \\ 1 & -\tilde{t}_2
  \end{pmatrix}^{-1}
\end{equation}
and we get the following identification with the notations
from~\cite[Theorem~8.1.1]{eynard} (which deals with yet another
reformulation of hypermaps):
\begin{equation}
  c=-1, \qquad a=-t_2, \qquad b=-\tilde{t}_2.
\end{equation}
Plugging into~\cite[Theorem~8.3.1]{eynard}, $Y(x)$ matches our present
definition~\eqref{eq:YXhyp}, and we obtain the form~\eqref{eq:xyhyp}
for $x(z)$ and $y(z)$ through the change of variable $z \to z/\gamma$,
with $\gamma^2=V$. The relation $X(y(z))=x(z)$ is obtained by
exchanging the roles of white and black faces, and reparametrizing
$z \to V/z$.

\begin{rem}
  Theorem~\ref{thm:parhyp} can be given a combinatorial
  interpretation, as will be explained in~\cite{AB21}.
\end{rem}

\paragraph{Specialization to constellations.}

Let us change the $d$ and $\tilde{d}$ of Theorem~\ref{thm:parhyp} into
$m$ and $md$, respectively, and set the face weights to
\begin{equation}
  t_i =
  \begin{cases}
    1 & \text{if $i=m$,}\\
    0 & \text{otherwise,}
  \end{cases} \qquad
  \tilde{t}_i =
  \begin{cases}
    x_{i/m} &  \text{if $m$ divides $i$,}\\
    0 & \text{otherwise.}
  \end{cases}
\end{equation}
It is not difficult to check that, with such weights, the
conditions~\eqref{eq:condhyp} imply that the series $a_k$ and $b_k$
vanish unless $m$ divides $k+1$. Thus, $x(z)$ and $y(z)$ can be put in
the form~\eqref{eq:xyconst}, the first condition~\eqref{eq:condhyp}
implying that $b_{m-1}=1$. By extracting the coefficient of $z^{1-km}$
for $k \geq 1$ in the second condition~\eqref{eq:condhyp}, we find
that $\alpha_k=a_{mk-1}$ is given by~\eqref{eq:alphadef}, while
extracting the coefficient of $z$ yields an equation equivalent
to~\eqref{eq:Vdef}. Identifying $F^\circ_p=\tilde{W}_{mp}$ and
$F^\bullet_p=W_{mp}$, we obtain Propositions~\ref{prop:ratX} and
\ref{prop:ratY} as specializations of Theorem~\ref{thm:parhyp}.


\vspace{-0.75em}

\end{document}